\documentclass[12pt]{amsart}
\usepackage[margin=1.0in]{geometry}                
\geometry{letterpaper}                   
\usepackage{amssymb}
\usepackage{amsmath, amsthm, enumerate}
\usepackage{amsfonts}
\usepackage{mathrsfs}
\usepackage{enumerate}
\usepackage{graphicx}

\newtheorem{thm}{Theorem}[section]
\newtheorem{prop}[thm]{Proposition}
\newtheorem{lemma}[thm]{Lemma}

\numberwithin{equation}{section} 

\theoremstyle{definition}

\newtheorem{defn}[thm]{Definition}

\theoremstyle{remark}

\newcommand{\RR}{\mathbb{R}}
\newcommand{\NN}{\mathbb{N}}

\newcommand{\supp}{\mathrm{supp}\,}

\title{On the maximal directional Hilbert transform} 
\author{ Izabella {\L}aba, Alessandro Marinelli, Malabika Pramanik}
\date{\today}
\subjclass[2010]{42B05, 42B15, 42B20, 42B25, 47G10}

\begin{document}

\begin{abstract}
For any dimension $n \geq 2$, we consider the maximal directional Hilbert transform $\mathscr{H}_U$ on $\mathbb R^n$ associated with a direction set $U \subseteq \mathbb S^{n-1}$: 
\[ \mathscr{H}_Uf(x) := \frac{1}{\pi} \sup_{v \in U} \Bigl| \text{p.v.} \int f(x - tv) \, \frac{dt}{t}\Bigr|.\] 
The main result in this article asserts that for any exponent $p \in (1, \infty)$, there exists a positive constant $C_{p,n}$ such that for any finite direction set $U \subseteq \mathbb S^{n-1}$,  \[||\mathscr{H}_U||_{p \rightarrow p} \geq C_{p,n} \sqrt{\log \#U}, \] where $\#U$ denotes the cardinality of $U$. As a consequence, the maximal directional Hilbert transform associated with an infinite set of directions cannot be bounded on $L^p(\mathbb{R}^{n})$ for any $n\geq 2$ and any $p \in (1, \infty)$. This completes a result of Karagulyan \cite{Karagulyan}, who proved a similar statement for $n=2$ and $p=2$.
\end{abstract}

\maketitle
\tableofcontents

\section{Introduction}

{\allowdisplaybreaks 
\noindent The fundamental and ubiquitous nature of the classical one-dimensional Hilbert transform has inspired the study of a large variety of operators that share some of its distinctive features. Among the numerous higher-dimensional variants of this transform that are available in the literature, the maximal directional Hilbert transform is of notable interest, in view of its connections with several central problems in harmonic analysis, such as Carleson's theorem on the convergence of Fourier series, estimates on maximal functions of Kakeya type and Stein's conjecture on the Hilbert transform along Lipschitz vector fields.  The treatises \cite{{Lacey-Li-TAMS}, {Lacey-Li-Memoirs}} of Lacey and Li contain an extensive survey of these connections. 
\vskip0.1in 
\noindent Given a unit vector $v \in \mathbb S^{n-1}$, the {\em{directional Hilbert transform}} $\mathscr{H}_v$ is defined initially on Schwartz functions on $\mathbb R^n$ as follows,  
\begin{eqnarray}
\mathscr{H}_{v} f (x):= \frac{1}{\pi} \text{p.v.} \int f(x - tv) \frac{dt}{t} = \frac{1}{\pi} \lim_{\epsilon \rightarrow 0^{+}} \int_{\left|t\right| > \epsilon}   f(x - tv)\frac{dt}{t}, \quad x \in \mathbb R^n. 
\end{eqnarray}
After a rotation that sends $v$ to the first canonical basis vector $e_1 = (1, 0, \ldots, 0)$, this is essentially a tensor product of the classical Hilbert transform in $x_1$ with the identity operator in the remaining variables. As a result, Lebesgue mapping properties of $\mathscr{H}_v$ are easy consequences of its one-dimensional counterpart \cite{{G1}, {G2}, {M-S}}; namely, $\mathscr{H}_v$ is bounded from $L^p(\mathbb R^n)$ to $L^q(\mathbb R^n)$ if and only if $1 < p = q < \infty$.  

\vskip0.1in 
\noindent
The maximal version of the operator $\mathscr{H}_v$, termed the {\em{maximal directional Hilbert transform}}, is the primary object of study in this article. Given a set of unit vectors $U \subseteq \mathbb S^{n-1}$ and initially for a Schwartz function $f$, it is defined to be 
\begin{eqnarray}\label{HT-max}
\mathscr{H}_{U} f(x) := \max_{v \in U} \left|\mathscr{H}_{v} f(x)\right|, \qquad x \in \mathbb R^n.  
\end{eqnarray}
For finite sets $U$, the triangle inequality gives $\mathscr{H}_{U}f \leq \sum_{v \in U} \mathscr{H}_v f$. Thus $\mathscr{H}_U$ continues to be bounded on the same Lebesgue spaces as the classical Hilbert transform, with the trivial bound 
\begin{equation} \label{trivial-bound} ||\mathscr{H}_U||_{p \rightarrow p} \leq \#U || \mathscr{H}_{e_1}||_{p \rightarrow p}, \qquad p \in (1, \infty). \end{equation} 
Here and throughout the paper, $||T||_{p \rightarrow p}$ will denote the operator norm of $T$ from $L^p(\mathbb R^n)$ to itself. This gives rise to the following natural questions: 
\vskip0.1in 
\begin{enumerate}[1.]
\item {\em{To what extent can one improve upon the trivial estimate}} \eqref{trivial-bound}?
\vskip0.1in
\item {\em{Do there exist infinite sets $U$ for which $||\mathscr{H}_U||_{p \rightarrow p}$ is finite for some $p \in (1, \infty)$}}?  
\end{enumerate}   
\vskip0.1in
\noindent For $n=2$, various aspects of question 1 above have been addressed in a large body of work \cite{{Lacey-Li-TAMS}, {Demeter}, {D-D}, {DP-P}}, encompassing results of two distinct types. With $U = \mathbb S^1$ and for $\mathscr{H}_U$ localized to a single frequency scale, Lacey and Li \cite{Lacey-Li-TAMS} have shown that the operator $f \mapsto \mathscr{H}_{\mathbb S^1} (\zeta \ast f)$ maps $L^2$ into weak $L^2$, and $L^p$ to itself for $p > 2$. Here $\zeta$ is a Schwartz function with frequency support $\{1 \leq |\xi| \leq 2\}$. For finite $U$ and in the unrestricted setting (i.e., without any Fourier localization), $\mathscr{H}_U$ has been studied in the more general context of maximal directional singular integral operators, co-authored in part by Demeter, Di Plinio and Parissis. For instance, the main results in \cite{{Demeter}, {D-D}} give that for a general direction set $U \subseteq \mathbb S^1$, 
\[ ||\mathscr{H}_U||_{p \rightarrow p} \leq C_p \log \#U, \qquad  2 \leq p < \infty, \]
where $C_p$ is a constant independent of $U$. For $p = 2$, this upper bound is in fact sharp for the uniformly distributed set of directions \[ U = \{e^{\frac{2 \pi i k}{N}} : k = 1, \ldots, N \}, \] 
see \cite[Section 3]{Demeter}. On the other hand, for lacunary sets $U \subseteq \mathbb S^1$ of finite order defined as in \cite{{D-D}, {DP-P}}, it has been shown that 
\begin{equation}  ||\mathscr{H}_U||_{p \rightarrow p} \leq C_p \sqrt{\log \#U}, \qquad 1 < p < \infty,  \label{lacunarity-upper} \end{equation}     
where the constant $C_p$ also depends on the lacunarity order of $U$.  For $n \geq 3$, partial results with $p = 2$ are due to Kim \cite[Theorem 2]{Kim}. Specifically, the estimate \[ ||\mathscr{H}_U||_{2 \rightarrow 2} \leq C N^{\frac{n-2}{2}}\] 
is shown to hold for a direction set $U \subseteq \mathbb S^{n-1}$ of cardinality $N^{n-1}$ in general position contained inside the positive orthant. The bound is shown to be sharp for a member of this class. 
\vskip0.1in
\noindent In contrast, question 2 is much less studied in complete generality. Even though phrased in terms of infinite direction sets, after a finitary and quantitative reformulation it is really a question about lower bounds on $||\mathscr{H}_U||_{p \rightarrow p}$ for general $U$. A result of Karagulyan \cite{Karagulyan} addresses this question in the planar setting and for $p = 2$, obtaining a lower bound of order $\sqrt{\log \#U}$ for $||\mathscr{H}_U||_{2 \rightarrow 2}$ in this case. The goal of this paper is to establish this bound in far greater generality, extending it to all exponents $p \in (1, \infty)$ and to all dimensions $n \geq 2$.  
For convenience, all logarithms below will be taken to the base 2.    
\begin{thm}\label{our Theorem2}
Let $U$ be a finite set of unit vectors in $\mathbb{R}^{n}$ with $n \geq 2$. Then for $1 < p < \infty$, there exists a positive constant $C_{p,n}$ such that 
\begin{eqnarray}\label{fondestim} 
\left\|\mathscr{H}_{U}\right\|_{p \rightarrow p}  \,\,  \geq  \,\,  C_{p,n}  \sqrt{ \log \#U}, 
\end{eqnarray}
where $\#U$ is the cardinality of the set $U$.
\end{thm}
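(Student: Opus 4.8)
The plan is to reduce the $n$-dimensional, general-$p$ statement to a lower bound that can be extracted from the structure of the directions themselves, exploiting a dichotomy. First I would observe that it suffices to prove the estimate when $U$ is a large \emph{finite} set and that, by a standard scaling/localization argument, only the pairwise \emph{angular separations} of the vectors in $U$ matter. The key combinatorial input will be a pigeonhole-type lemma: given $\#U = N$ directions in $\mathbb S^{n-1}$, either (a) many of them (say $\gtrsim \log N$ of them, or a subset whose size is still a power of $N$) are ``quantitatively separated'' — lying in distinct caps of a well-chosen radial decomposition — or (b) many of them are ``clustered,'' accumulating near a single direction at geometric rates, hence contain a long lacunary-type subsequence. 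I expect the main obstacle to be making this dichotomy genuinely quantitative and \emph{uniform in $n$}: one must choose the cap decomposition and the clustering threshold so that in \emph{both} cases the retained subset has size at least $N^{c}$ for a dimensional constant $c>0$, so that $\sqrt{\log(\text{subset size})} \gtrsim_{n} \sqrt{\log N}$.

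In case (a), the separated directions behave, after a rotation, essentially like the uniformly distributed set on $\mathbb S^1$ embedded in a two-dimensional slice, or like a genuinely $n$-dimensional well-distributed set; the lower bound $\|\mathscr H_U\|_{2\to 2}\gtrsim \sqrt{\log N}$ then follows from a direct computation. The natural test function is a single ``bump'' $f = \one_{B}$ on a small ball (or a Knapp-type example adapted to the caps), for which one shows that for each separated direction $v$, the directional Hilbert transform $\mathscr H_v f$ is comparably large — of size $\gtrsim 1/(\text{distance to the hyperplane } v^\perp \text{ through the bump})$ — on a thin slab orthogonal to $v$, and that these slabs are essentially disjoint. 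Summing the $L^2$ masses of $\mathscr H_v f$ over the separated $v$'s (after the disjointness is verified via the angular separation) gives $\|\mathscr H_U f\|_2^2 \gtrsim (\log N)\, \|f\|_2^2$ up to harmless factors. For general $p \in (1,\infty)$, one replaces the single bump by a \emph{superposition} of $\sim \log N$ rescaled bumps placed at geometrically spaced locations — a standard device (cf. the Besicovitch/Kakeya-type constructions and the $p$-independent lower bounds for maximal operators) that converts an $L^2$ gain of $\sqrt{\log N}$ into an $L^p$ gain of the same order for every $p$, because the maximal operator picks out, at each geometric scale, the contribution of one bump while the others are summable.

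In case (b), a long lacunary subsequence $\{v_k\}$ with $v_k \to w$ at a geometric rate is present, and here I would use the known near-optimality of the lacunary bound: the same circle of ideas behind \eqref{lacunarity-upper} shows that the maximal Hilbert transform over a lacunary set of length $M$ has operator norm $\gtrsim \sqrt{\log M}$, the matching lower bound being classical (it is essentially the lower bound for the lacunary maximal function / the Littlewood--Paley square function obstruction). Concretely, one tests against a lacunary-frequency sum $f = \sum_{k} \zeta_k$ with $\widehat{\zeta_k}$ supported in dyadic annuli matched to the angular scales $|v_k - w|$, orients everything along $w$, and checks that for a positive-measure set of $x$ there is an index $k$ with $|\mathscr H_{v_k} f(x)|$ comparable to the $L^2$-normalized square-function size $\sqrt{\log M}$; a Khintchine/randomization argument over the signs of the $\zeta_k$ makes this rigorous, and the $p$-independence is handled exactly as in case (a). Combining the two cases with the dichotomy lemma yields $\|\mathscr H_U\|_{p\to p} \gtrsim_{p,n} \sqrt{\log N}$, completing the proof.
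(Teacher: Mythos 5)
Your proposal hinges on a dichotomy --- every set of $N$ directions contains either a ``quantitatively separated'' subset or a lacunary subsequence of size $N^{c}$ --- and this is the step that fails. The theorem must hold for completely arbitrary $U$, and Cantor-like direction sets defeat both horns: take $N=2^{m}$ directions built by $m$ generations of splitting each cluster into two sub-clusters at a vastly smaller angular scale. No subset of size $N^{c}$ is separated in the sense your single-bump computation needs (that computation requires the directions to have positive angular density at their own separation scale across $\gtrsim\log N$ dyadic ranges of $|x|$; for a clustered set the integral $\int r^{-2}\cdot(\text{arc length covered at radius }r)\,dr$ degenerates), while the longest lacunary subsequence converging to a single direction has length only $O(\log N)$, which would yield $\sqrt{\log\log N}$. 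The paper makes no structural classification of $U$ at all: its only geometric input (Lemma \ref{Lemma 9}) is that a generic line parallel to $e_n$ meets the hyperplanes $u^{\perp}$, $u\in U$, at distinct points, which simultaneously orders the directions and produces pairwise disjoint sectors $S_i$ with $\Gamma_{u_l}\supset S_i$ precisely when $i<l$; the test function is then built adaptively in frequency inside these sectors for whatever $U$ is given.

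Your case (b) has a second, independent gap. The lower bound $\gtrsim\sqrt{\log M}$ for lacunary sets is not classical for general $p$ and $n\geq 2$ --- for $p=2$, $n=2$ it is Karagulyan's theorem, and in general it is a special case of the statement being proved, so invoking it is circular. Moreover the mechanism you sketch cannot produce the gain: Khintchine randomization of the signs of the $\zeta_k$ makes the partial sums $\sum_{j<k}\zeta_j$ behave like the square function, i.e.\ of the same order $\sqrt{M}$ as $\|f\|_p$, so the ratio is $O(1)$. The $\sqrt{\log\#U}$ gain in the paper comes from the opposite of randomization: the $2^m-1$ functions $f_N$ are arranged in a binary tree of depth $m$ whose spatial supports are nested so that only $m$ of them are nonzero at any point (hence $\sum_N|f_N|\leq m$ pointwise and $\|f\|_{p}\lesssim\sqrt{m}$ by near-orthogonality of the frequency cubes), while a deterministic permutation $\sigma$ and the signed tree structure (Lemma \ref{Karagulyan's tree systems lemma}) force the maximal partial sum --- which is exactly what $T_{u_l}f$ computes, by the sector construction --- to be $\geq\frac13\sum_N|f_N|\sim m$ on a set of measure $\gtrsim 1$. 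Engineering that constructive sign coherence is the heart of the proof and is absent from your outline. Your case (a) computation with a single bump is correct for genuinely well-distributed sets, but it only covers a special class of $U$ and cannot be patched into a proof of the general statement without a replacement for the missing dichotomy.
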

\noindent {\em{Remarks: }} \begin{enumerate}[1.] 
\item Since the single-vector Hilbert transform $\mathscr{H}_v$ is not bounded as an operator on $L^1(\mathbb R^n)$ or on $L^{\infty}(\mathbb R^n)$ or from $L^p(\mathbb R^n)$ to $L^q(\mathbb R^n)$ for $p \ne q$, the theorem is trivially true for these exponents.   
\vskip0.1in 
\item The lower bound in \eqref{fondestim} is in fact attained by certain direction sets $U$, as \eqref{lacunarity-upper} shows. This gives rise to an interesting question: which geometric properties of a direction set $U$ dictate the growth rate of $||\mathscr{H}_U||_{p \rightarrow p}$? 
\vskip0.1in

\item Our result extends easily to the periodic setting, with a similar proof. More explicitly, if $\mathscr{H}_U$ is viewed as an operator from $L^p(\mathbb T^n)$ to $L^q(\mathbb T^n)$, where $\mathbb T^n$ denotes the $n$-dimensional unit torus, then our arguments show that \[ ||\mathscr{H}_U||_{L^{p}(\mathbb T^n) \rightarrow \mathbb L^q(\mathbb T^n)} \geq C_{p,q,n} \sqrt{\log \#U}, \]
for all $p, q \in (1, \infty)$ with $q \leq p$. The operator is unbounded for all other choices of $p,q$. 
The construction of test functions on the torus proceeds similarly, except the convolution in 
(\ref{def-fN}) is taken on $\mathbb{T}^n$ instead of $\mathbb{R}^n$.

\vskip0.1in
\item As a consequence of \eqref{fondestim}, we are able to conclude the unboundedness of $\mathscr{H}_U$ for {\em{all}} infinite direction sets $U$ in all dimensions and on all nontrivial Lebesgue spaces. We record this below.  
\end{enumerate} 
\begin{thm}\label{our Theorem1}
For any infinite set of unit vectors $U$ in $\RR^{n}$ with $n \geq 2$, the operator $\mathscr{H}_{U}$ cannot be extended to a bounded operator on $L^{p}( \mathbb{R}^{n} )$ for any $1 < p < \infty$.
\end{thm}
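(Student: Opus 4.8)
The plan is to deduce Theorem \ref{our Theorem1} from Theorem \ref{our Theorem2} by a straightforward finitary reduction. Suppose, for contradiction, that $U \subseteq \mathbb{S}^{n-1}$ is an infinite direction set and that $\mathscr{H}_U$ extends to a bounded operator on $L^p(\mathbb{R}^n)$ for some $p \in (1,\infty)$; call its operator norm $M < \infty$. The first observation is that boundedness of $\mathscr{H}_U$ is inherited by every subset: if $V \subseteq U$, then for every Schwartz $f$ and every $x$ we have $\mathscr{H}_V f(x) = \max_{v \in V}|\mathscr{H}_v f(x)| \leq \max_{v \in U}|\mathscr{H}_v f(x)| = \mathscr{H}_U f(x)$, so $\|\mathscr{H}_V\|_{p\to p} \leq \|\mathscr{H}_U\|_{p\to p} = M$. (One should note here that $\mathscr{H}_U f$ is genuinely measurable and the pointwise bound makes sense: for a finite set this is immediate, and the monotone limit over an exhaustion by finite subsets handles the general case, so no subtlety arises.)

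Next I would invoke Theorem \ref{our Theorem2}: since $U$ is infinite, for every positive integer $N$ we may choose a finite subset $U_N \subseteq U$ with $\#U_N = N$, and the theorem gives
\begin{equation*}
\|\mathscr{H}_{U_N}\|_{p\to p} \geq C_{p,n}\sqrt{\log N}.
\end{equation*}
Combining this with the uniform upper bound from the previous paragraph yields $C_{p,n}\sqrt{\log N} \leq M$ for every $N$. Letting $N \to \infty$, the left-hand side tends to infinity while the right-hand side is the fixed finite constant $M$, a contradiction. Hence no such $p$ and no such bounded extension can exist, which is precisely the assertion of Theorem \ref{our Theorem1}.

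There is essentially no obstacle here: the entire content of Theorem \ref{our Theorem1} is packaged into Theorem \ref{our Theorem2}, and the deduction is the elementary "compactness/exhaustion" principle that an operator norm dominated on all finite restrictions by a quantity growing without bound cannot itself be finite. The only point meriting a sentence of care is the monotonicity/measurability remark above, ensuring that passing from finite subsets $U_N$ to the infinite $U$ (and back) is legitimate; since the test functions are Schwartz and $\mathscr{H}_v f$ depends continuously on $v$, the supremum defining $\mathscr{H}_U$ is a supremum of continuous functions and everything is well behaved. I would therefore present this as a short corollary-style argument immediately following the proof of Theorem \ref{our Theorem2}.
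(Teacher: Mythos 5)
Your argument is correct and is exactly the deduction the paper intends: Theorem \ref{our Theorem1} is recorded as an immediate consequence of Theorem \ref{our Theorem2}, obtained by restricting to finite subsets $U_N \subseteq U$ of arbitrarily large cardinality, using the monotonicity $\|\mathscr{H}_{U_N}\|_{p\to p} \leq \|\mathscr{H}_U\|_{p\to p}$, and letting the lower bound $C_{p,n}\sqrt{\log N}$ blow up. No substantive difference from the paper's (implicit) proof.
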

\noindent This is in sharp contrast with the behaviour of the closely related maximal directional operator 
\[ \mathscr{M}_Uf(x) :=  \sup_{v \in U} \sup_{r > 0} \frac{1}{2r} \int_{-r}^{r} |f(x - tv)|\, dt, \] 
whose Lebesgue boundedness is {\em{not}} connected with the finitude of $U$. For instance, the operator $\mathscr{M}_U$ is known to be $L^p$-bounded for all $p \in (1, \infty]$ if $U$ is an infinite direction set of lacunary type in $\mathbb R^n$, see for example \cite{{A}, {Carb}, {N-S-W}, {P-R1}, {P-R2}, {S-S}}. For other types of direction sets that lack the feature of finite-type lacunarity, the operator $\mathscr{M}_U$ is known to be unbounded on $L^p$ for all $p \in [1, \infty)$. This has been studied in \cite{{Bateman}, {Bateman-Katz}, {Katz}, {Kroc-P}}. 

\subsection{Notation and a preliminary reduction} We recall the equivalent Fourier-analytic formulation of the problem. For functions
$f, g \in L^2(\RR^n)$, we will use $\widehat{f}$ and $g^{\vee}$ to denote the Fourier transform and inverse Fourier transform respectively, 
$$
\widehat{f}(\xi):=\int f(x) e(-\xi\cdot x) dx, \qquad g^\vee(x):=\int g(\xi) e(\xi\cdot x) d\xi,
$$
where $e(t) :=e^{2\pi i t}$ for $t\in\RR$. If $E\subset \RR^n$ is a measurable set, we will use $\chi_E$ to denote its characteristic function, and $|E|$ to denote its Lebesgue measure. Given a unit vector $v\in \RR^{n}$, we will use $\Gamma_v$ to denote the half-space 
\begin{eqnarray} \label{half-space}
\Gamma_{v}  := \left\{x \in \mathbb{R}^{n} : x \cdot v > 0\right\}.
\end{eqnarray}
\vskip0.1in
\noindent It is well known \cite{{G1}, {G2}, {M-S}} that the classical one-dimensional Hilbert transform $H$ can be expressed as a Fourier multiplier operator,
\[Hf(x)  = (-i \,\hbox{sgn\,}(\cdot)\widehat{f} \  )^{\vee} (x).\]
For the directional Hilbert transform, this means that  
$\mathscr{H}_{v} f =  -i \left[ (2\chi_{\Gamma_{ v} } \widehat{f} \ )^{ \vee} - f \right]$. 
Accordingly, we define 
\[
T_{v}f := ( \chi_{\Gamma_{v}} \widehat{f}\,)^{\vee}, \qquad \text{ and } \qquad T_{U} f (x)   :=   \max_{v \in U} |T_{v}f (x)|.
\]
Thus the boundedness of (\ref{HT-max}) is equivalent to that of $T_U$.
In particular,  Theorem \ref{our Theorem2} is equivalent to the bound
\begin{eqnarray}\label{my fond estim changed}
\|T_{U} \|_{p \rightarrow p}   \geq   C_{p,n}  
\sqrt{ \log \# U}, \ \ 1 < p <\infty.
\end{eqnarray}


\subsection{Overview of the proof} The proof of \eqref{my fond estim changed} relies on three main components. One is geometric. More precisely, a suitable pruning and ordering of the direction set $U = \{u_1, \cdots, u_{2^m} \}$ generates a finite number of mutually disjoint conic sectors $S_N \subseteq \mathbb R^n$, with the property that  $S_N$ is contained in $\Gamma_{u_k}$ if $N \leq k$ and is disjoint from $\Gamma_{u_k}$ otherwise. This part of the argument is greatly simplified in the planar setting, but needs a little more care in general dimensions. This geometric ingredient is contained in Lemma \ref{Lemma 9}. Its proof is presented in Section \ref{geometric-section}.  
\vskip0.1in
\noindent The second ingredient is analytical. Following the general guidelines of \cite{Karagulyan} and given a fixed Lebesgue exponent $p$, the sectors $S_N$ are used to construct a test function $f$ of the form $f = \sum f_N$, based on which \eqref{my fond estim changed} will be verified. On one hand, the function $f_N$ is frequency-supported in a large cube contained in the sector $S_N$. Not only are these cubes disjoint from one another, they are strongly separated in a way that ensures a high degree of orthogonality among the various summands $f_N$. On the other hand, the essential spatial support of $f_N$ is in a set $E_N \subseteq [0,1]^{n}$, with the property that any two sets in the collection $\{ E_N \}$ are either disjoint or nested. The critical features of this iterative construction of $f$ have been laid out in Proposition \ref{building blocks proposition} of Section \ref{proof of thm}, and the proof of \eqref{my fond estim changed} appears here, modulo the two main estimates 
\begin{equation}  ||f||_p \lesssim_p \sqrt{m} \quad \text{ and } \quad |\{x \in \mathbb R^n: |T_Uf(x) \gtrsim m \}| \gtrsim 1,  \label{main-estimates-for-proof}  \end{equation}  
the details of which are given in later sections. 
\vskip0.1in
\noindent The proof of the estimates in \eqref{main-estimates-for-proof} constitutes the combinatorial component of the argument. Section \ref{norm-estimate-section}  contains the steps that lead to the first inequality in \eqref{main-estimates-for-proof}. The nested structure of the sets $E_N$ is best encoded as a binary tree. Combined with the stringent frequency localizations imposed on $f_N$, this results in an upper bound on $||f||_p$ that is essentially comparable to $||f||_2$. Choosing $p$ a large even integer without loss of generality allows us to express $||f||_p^p$ as the sum of a large number of terms of the form \[ \int h_{N_1} \cdots h_{N_p} \quad \text{ where }  h_N \text{ is either } f_N \text{ or } \overline{f}_N. \]  Many of these terms can be ignored, based on disjoint spatial and frequency support considerations. The language of trees aids greatly in the book-keeping, identifying strings of indices $(N_1, \cdots, N_p)$ that genuinely contribute to the norm. This segment of the proof has no corresponding counterpart in \cite{Karagulyan}, where $p$ was always 2. 
\vskip0.1in
\noindent In addition, the choice of modulation parameters in $f_N$ endows the functions Re$(f_N) = \varphi_N$ with Haar function-like properties, termed ``signed tree systems". Basic materials concerning trees and signed tree systems have been collected in Section \ref{tree-section}. An important fact concerning a signed tree system $\{\varphi_N\}$ is proved in Section \ref{tree-section-permutation}: namely, for a given $m$ and despite obvious oscillations, there exists a universal permutation $\sigma$ of $\{1, \cdots, 2^m-1 \}$ for which the largest partial sum 
\[ \max_{1 \leq l \leq 2^m} \Bigl| \sum_{N=1}^{l} \varphi_{\sigma(N)} \Bigr| \text{ is comparable to } \sum_{N=1}^{2^m-1} |\varphi_N|. \]
This choice of $\sigma$ dictates the ordering of the sectors $S_N$ and is critical to the second estimate in \eqref{main-estimates-for-proof}.
   
\section{Trees and tree systems} \label{tree-section} 

\subsection{Trees} \label{just-trees-subsection} Given a large positive integer $m$, we will use the following system of double-indexing to keep track of a large collection of sets and functions that arise in the sequel. Any positive integer $1 \leq N \leq 2^m$ will be identified with the pair $(k,j)$, where 
\begin{eqnarray}\label{e-jk}
N = 2^{k} + j - 1,  \quad  1 \leq j \leq 2^{k},  \quad k = 0, 1, \ldots  m-1.
\end{eqnarray}
As indicated in the introduction, the language of binary trees is a convenient tool in depicting this double-indexing system.  Consider a full binary tree $\mathcal T_m$ of height $m$, and label each tree vertex as $(k,j)$, where $k$ is the \emph{height} of the vertex (so that $k$ ranges from $0$ to $m-1$), and all vertices of height $k$ are labelled lexicographically as $(k,1), (k,2), \dots,(k,2^k)$. Given a vertex $(k,j)$, 
\begin{itemize} 
\item its {\em{parent}} can be identified as $\left(k-1, \lfloor \frac{j+1}{2} \rfloor \right)$ if $k \geq 1$, and 
\item its {\em{left and right children}} can be identified respectively as $(k+1,2j-1)$ and $(k+1,2j)$ if $k \leq m-2$. 
\end{itemize} 
A \emph{ray} $\mathscr{R}$ of length $l+1$ rooted at $(k, j_{0})$ is a sequence of vertices $\left\{(k, j_{0}), (k+1, j_{1}), \ldots (k+l, j_{l})\right\}$ where, for each $i = 1, \ldots l$, the vertex $(k+i, j_{i})$ is a child of $(k+i-1, j_{i-1})$.
We will also say that the vertex $(k',j')$ is  a \emph{descendant} of $(k,j)$ if $k'>k$ and 
$(k,j)$ lies on the ray connecting $(k',j')$ to the root $(0,1)$ of the tree.
\vskip0.1in
\noindent In parallel to the double-numbering system in (\ref{e-jk}), we will use a similar convention for tree vertices, so that the vertex $(k,j)$ will be alternatively labelled by the number $N(k,j) = 2^{k} + j -1$. We will use $h(N) = k$ to denote the height of the vertex  $N$.

\subsection{Tree systems} \label{tree-systems-subsection} Let $Q=[0,1]^n$. We will consider finite sequences of functions $\{f_{N} : 1 \leq N \leq 2^m-1 \}$, where each $f_N$ is a complex-valued function supported in $Q$, and use our double-numbering system from Section \ref{just-trees-subsection} to order the sequence. Thus
\begin{align*}
&f_{1} =  f^{(0)}_{1}, 
\\
&f_{2}  = f^{(1)}_{1},  \ \   f_{3}  =  f^{(1)}_{2},
\\
&f_{4}  =  f^{(2)}_{1},  \ \   f_{5}  = f^{(2)}_{2},  \ \   f_{6} =  f^{(2)}_{3}, \ \   f_{7} =  f^{(2)}_{4}, \text{etc}. 
\end{align*}
In many of our applications, the sequences of functions $f_N$ will satisfy at least one of the following properties: 
\begin{itemize} 
\item For any pair $f_N,f_M$ with $N\neq M$, the supports of $f_N$ and $f_M$ are either nested or disjoint, or 
\item More specifically, if $M>N$, then $f_N$ has constant sign on the support of $f_M$ (up to sets of Lebesgue measure 0). 
\end{itemize} 
A prototype of such a system is provided by Haar functions (cf. \cite{N-U}). The abstract formulation of the property we need was given by Karagulyan in \cite{Karagulyan}. We follow the rough outline of Karagulyan's presentation in the definitions below, but also modify the terminology and use the language of graph theory more extensively in order to accommodate the later parts of the proof that are not present in \cite{Karagulyan}. In particular, we use the term ``signed tree systems" to refer to the ``tree systems" of \cite{Karagulyan}.

\begin{defn}\label{tree-def}
Let $f_N = f_j^{(k)}$, $1 \leq N \leq 2^m-1$, be a finite sequence of functions, indexed as above with $N, j, k$ related by \eqref{e-jk}. 
\begin{enumerate}[(a)] 
\item \label{tree-system-def} We say that $\{f_{N}\}$ is a \emph{tree system} if
 the following holds Lebesgue almost everywhere on $Q$:
\begin{equation}\label{unsigned-tree}
\supp f^{(k+1)}_{2j-1} \cap \supp f^{(k+1)}_{2j} =\emptyset, \qquad \supp f^{(k+1)}_{2j-1} \cup \supp f^{(k+1)}_{2j} \subset \supp  f^{(k)}_{j}.
\end{equation}
\vskip0.1in 
\item \label{signed-tree-system-def} We say that the sequence $\{f_{N}\}$ is a \emph{signed tree system} if
the following holds Lebesgue almost everywhere on $Q$:
\begin{align}\label{tree system condition 1}
\supp f^{(k+1)}_{2j-1}   &\subset   \bigl\{x \in Q : f^{(k)}_{j}(x) > 0\bigr\}, \\ 
\label{tree system condition 2}
\supp f^{(k+1)}_{2j}   &\subset  \bigl\{x \in Q : f^{(k)}_{j}(x) < 0\bigr\}.
\end{align}
Note that if $N=2^k + j -1$, then $2^{k-1}+\lfloor \frac{j+1}{2} \rfloor -1 =  \lfloor \frac{N}{2}\rfloor =: N^{\ast}$. In particular, \eqref{tree system condition 1} and \eqref{tree system condition 2} are equivalent to  
\begin{equation}\label{tree system condition 3}
\supp f_{N}   \,\,  \subset  \,\,    \Big\{ x \in Q : (-1)^{j-1}  f_{N^{\ast}}(x) > 0 \Big\}.
\end{equation}
\end{enumerate} 
\end{defn}
\noindent Figure \ref{fig1} shows the relations among the supports. Clearly, every signed tree system is a tree system. 
\begin{figure}[htbp]\centering\includegraphics[width=4.in]{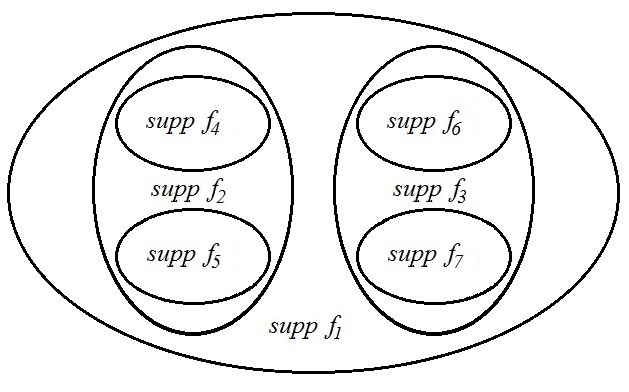}
\caption{The nested supports of tree system functions.}
\label{fig1}
\end{figure}

\vskip0.1in
\noindent The terminology of trees adapts easily to tree systems of functions. Thus for $k=0,1,\dots, m-2$, each function $f_j^{(k)}$ in a tree system is identified with a vertex in a complete binary tree of height $m$, and has two children $f^{(k+1)}_{2j-1}$ and $f^{(k+1)}_{2j}$ with mutually disjoint (up to sets of measure 0) supports, both supported on $\supp f_j^{(k)}$. In a signed tree system, we have
the additional property that 
the left child
$f^{(k+1)}_{2j-1}$ of $f^{(k)}_j$ is supported on the set where $f^{(k)}_{j}(x) > 0$, and the right child
$f^{(k+1)}_{2j}$ is supported on the set where $f^{(k)}_{j}(x) <0$. Iterating this, we get the following.
\begin{lemma}\label{lemma-nesting}
Let $\{f_{N}: N = 1, \ldots, 2^m-1\}$ be a tree system. Then
for $(k,j)\neq (k',j')$, the supports of $f_j^{(k)}$ and $f_{j'}^{(k')}$ are either disjoint or nested. Moreover:
\vskip0.1in 
\begin{enumerate}[(a)] 
\item If
$(k,j)$ and $(k',j')$ do not lie on the same tree ray (i.e. neither vertex is a descendant of the other), 
then the supports are disjoint. 
\vskip0.1in
\item If $(k',j')$ is a descendant of $(k,j)$, then $\supp f_{j'}^{(k')} \subset \supp f_{j}^{(k)}$.
\vskip0.1in
\item \label{ray-part} For each $x\in Q$ such that $f_1^{(0)}(x)\neq 0$, there is a unique maximal ray \[ \mathscr{R}(x)
=\{ (0,1), (1,j_1),\dots, (k,j_k) \} \text{ such that } f_{j_i}^{(i)}(x)\neq 0  \text{ for }  i=1,\dots,k. \] 
The ray terminates at $ f_{j_k}^{(k)}$ when either $k=m-1$ or both children of $ f_{j_k}^{(k)}$ take value $0$ at $x$.
\vskip0.1in
\item \label{left-right-part} If $\{f_{N} : 1 \leq N \leq 2^m-1\}$ is a signed tree system, then we have the additional property
that $\mathscr{R}(x)$ encodes the sign of $f_{1}^{(0)}, f_{j_1}^{(1)},\dots, f_{j_{k-1}}^{(k-1)}$
at $x$: 
\[ \begin{cases} &\text{it turns left at $(i,j_i)$
(i.e.\ goes to the left child $(i+1,2j_i-1)$)  if $f^{(i)}_{j_i}(x) > 0$, and } \\ \\  &\text{it turns right at $(i, j_i)$ (i.e.\ goes to the right child $(i+1, 2j_i)$) if $f^{(i)}_{j_i}(x) < 0$.} \end{cases} \] 
\end{enumerate} 

\end{lemma}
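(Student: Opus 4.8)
The statement to prove is Lemma~\ref{lemma-nesting}, which collects structural facts about (signed) tree systems. My plan is to derive everything from the two defining conditions in \eqref{unsigned-tree}, treating the unsigned statements (a)--(c) first and then adding the sign bookkeeping for (d).

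First I would establish the basic dichotomy ``disjoint or nested'' together with parts (a) and (b) simultaneously, by induction on the height difference along the tree. The key observation is a \emph{local} consequence of \eqref{unsigned-tree}: for a fixed vertex $(k,j)$, its two children have disjoint supports, both contained in $\supp f_j^{(k)}$. Iterating this down one ray gives part (b) immediately: if $(k',j')$ is a descendant of $(k,j)$, then $\supp f_{j'}^{(k')}$ is contained in the support of each intermediate ancestor, hence in $\supp f_j^{(k)}$. For part (a), suppose $(k,j)$ and $(k',j')$ lie on no common ray. Let $(k_0, j_0)$ be their lowest common ancestor in $\mathcal{T}_m$ (which exists and has height $k_0 < \min(k,k')$ since neither is a descendant of the other); then $(k,j)$ is a descendant of one child of $(k_0,j_0)$ and $(k',j')$ is a descendant of the \emph{other} child. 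By part (b), $\supp f_j^{(k)}$ lies in the support of the first child and $\supp f_{j'}^{(k')}$ lies in the support of the second child, and these two children have disjoint supports by \eqref{unsigned-tree}. Hence the supports of $f_j^{(k)}$ and $f_{j'}^{(k')}$ are disjoint. Combining: any two distinct vertices are either on a common ray (then one is a descendant of the other, and (b) gives nesting) or not (then (a) gives disjointness), which proves the opening dichotomy.

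For part (c), fix $x \in Q$ with $f_1^{(0)}(x) \neq 0$, and build the ray $\mathscr{R}(x)$ greedily starting from the root $(0,1)$. Inductively, having reached a vertex $(i,j_i)$ with $f_{j_i}^{(i)}(x) \neq 0$ and $i \leq m-2$, look at its two children $f_{2j_i-1}^{(i+1)}$ and $f_{2j_i}^{(i+1)}$: by \eqref{unsigned-tree} they have disjoint supports, so at most one of them is nonzero at $x$; if exactly one is nonzero, extend the ray to that child; if neither is, stop (and also stop if $i = m-1$). This produces a ray on which every function is nonzero at $x$, and it is maximal by construction. Uniqueness follows from the disjointness of siblings' supports at each branching: there is never a choice of which child to descend into, so any ray through $x$ along which the functions are nonzero must coincide with the one just built. (One should note the ray could have length $1$, i.e.\ terminate at the root itself if both children of the root vanish at $x$; the statement accommodates this.)

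Finally, part (d) is the signed refinement, and this is where the extra hypotheses \eqref{tree system condition 1}--\eqref{tree system condition 2} (equivalently \eqref{tree system condition 3}) enter. At each branching vertex $(i,j_i)$ of $\mathscr{R}(x)$ with $i \leq k-1$, the ray continues to the unique child whose support contains $x$. By \eqref{tree system condition 1}, the support of the left child $f_{2j_i-1}^{(i+1)}$ is contained in $\{f_{j_i}^{(i)} > 0\}$, and by \eqref{tree system condition 2}, the support of the right child $f_{2j_i}^{(i+1)}$ is contained in $\{f_{j_i}^{(i)} < 0\}$. Hence if $x$ lies in the support of the left child, then $f_{j_i}^{(i)}(x) > 0$, and if it lies in the support of the right child, then $f_{j_i}^{(i)}(x) < 0$; conversely, since these two support sets are where the sign of $f_{j_i}^{(i)}$ is respectively positive and negative (and $f_{j_i}^{(i)}(x)\neq 0$ because $(i,j_i)\in\mathscr{R}(x)$), the ray must turn left exactly when $f_{j_i}^{(i)}(x) > 0$ and right exactly when $f_{j_i}^{(i)}(x) < 0$. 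This is precisely the claimed encoding. I do not anticipate a genuine obstacle here; the only mild subtlety is the a.e.\ bookkeeping --- all containments in \eqref{unsigned-tree} and \eqref{tree system condition 1}--\eqref{tree system condition 2} hold only up to null sets, so ``disjoint,'' ``nested,'' and the sign statements should all be read modulo sets of Lebesgue measure zero, and the pointwise ray construction in (c)--(d) should be understood for $x$ outside a fixed null set. Organizing the induction so that this exceptional null set is fixed once at the start (as the union of the finitely many null sets from \eqref{unsigned-tree}) keeps the argument clean.
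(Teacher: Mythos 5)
Your proof is correct and follows exactly the route the paper intends: the paper states this lemma without a written proof, simply noting that it follows by iterating the local support relations in \eqref{unsigned-tree} and \eqref{tree system condition 1}--\eqref{tree system condition 2}, which is precisely what you carry out (descendant nesting by iteration, disjointness via the lowest common ancestor, the greedy ray construction, and the sign bookkeeping). Your closing remark about fixing the exceptional null set once at the outset is a sensible touch that the paper leaves implicit.
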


\begin{figure}[htbp]\centering\includegraphics[width=4.in]{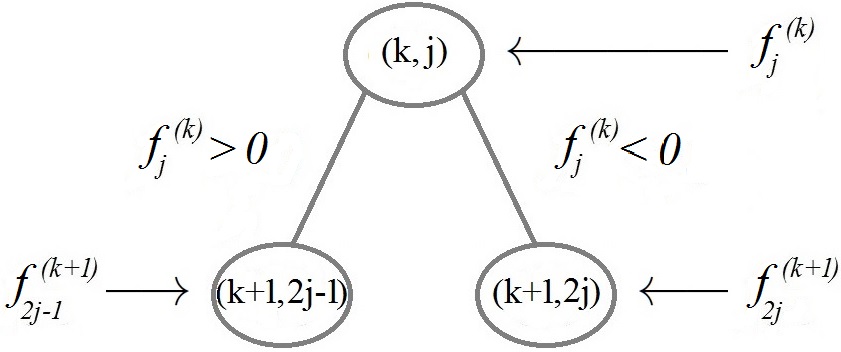}
\caption{For a signed tree system, the signs of $f_j^{(k)}$ are encoded in the binary tree.}
\label{fig2}
\end{figure}
%


\subsection{Choice of the permutation $\sigma$} \label{tree-section-permutation}
Here we define a special permutation $\sigma$ of $\{1,\dots,2^m-1\}$ that plays a central role in the subsequent analysis.
For $N = 2^{k} + j -1$ as in (\ref{e-jk}), let
\begin{eqnarray} \label{def-tN}
t_{N}   =   t^{(k)}_{j}    :=  \frac{2j-1}{2^{k+1}}   \in  \left[0,1\right].
\end{eqnarray}
Thus $t_1=\frac{1}{2},\ t_2=\frac{1}{4}, \ t_3=\frac{3}{4},\ t_4=\frac{1}{8},\ t_5=\frac{3}{8}, \ t_6=\frac{5}{8}$, etc. 
Observe that a complete binary tree with $m$ levels can be represented as a planar graph
so that the vertex $(k,j)$ has the $x$-coordinate $t_j^{(k)}$. Figure \ref{fig3}
illustrates this for $m=3$.

\begin{figure}[htbp]\centering\includegraphics[width=4.in]{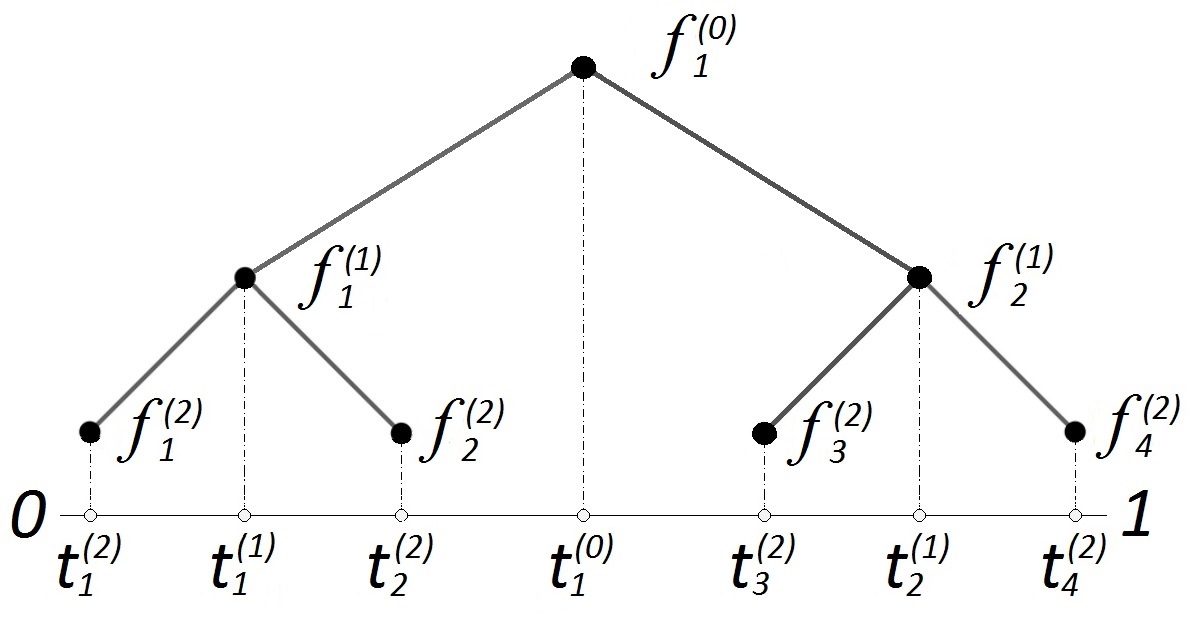}
\caption{The full binary tree with $m=3$ and with the numbers $t_N$.}
\label{fig3}
\end{figure}
\vskip0.1in
\noindent We now rearrange the sequence $\{t_{N}\}\subset[0,1]$ in increasing order. Specifically, there exists a unique permutation $\sigma$ of the numbers $\left\{1, 2, \ldots 2^{m}-1\right\}$, depending only on $m$, such that 
\begin{eqnarray} \label{definition of sigma}
t_{\sigma(1)}    <  t_{\sigma(2)}   <    \ldots   < t_{\sigma(2^{m}-1)}.
\end{eqnarray}
(In the example in Figure 3, we have $\sigma(1)=4,\sigma(2)=2, \sigma(3)=5$, etc.) 
\vskip0.1in
\noindent If $f_1,\dots,f_{2^m-1}$ are Haar functions on the line, then for each $N=1,\dots,2^m-1$
the number $t_N$ is the coordinate of the point where $f_N$ changes sign from positive to negative, and the permutation
$\sigma$ arranges the sequence $\{t_N\}$ in increasing order. This observation leads directly to a special case, due to Nikishin and Ulyanov \cite{N-U}, of  Lemma \ref{Karagulyan's tree systems lemma} below. The generalization of the lemma to general tree systems is due to Karagulyan \cite{Karagulyan}; while Karagulyan states it only for $n=2$, the same proof works in all dimensions. We follow the argument of \cite{Karagulyan}, with minor corrections\footnote{Karagulyan uses $f_{\sigma(h)}(x) \leq 0$
instead of $f_{\sigma(h)}(x) < 0$ in his definition of $l_x$. With that definition, the property
(\ref{tree system claim}) does not necessarily hold. We have rewritten that part of the proof. 
Alternatively, Karagulyan's proof does work if the definition of $l_x$ is changed to 
$l_{x} = \max  \left\{ h : 1 \leq h < 2^{m},\  f_{\sigma(k)}(x) \leq  0\  \forall\  1\leq k\leq h \right\}$ instead.
We thank an anonymous referee, as well as G.~Karagulyan (private communication), for bringing that to our attention.
 }
and expository changes.


\begin{lemma} (\cite[Lemma 1]{Karagulyan}) \label{Karagulyan's tree systems lemma} 
If $\sigma$ is the permutation defined in \eqref{definition of sigma}, then for every signed tree system $f_1, \dots , f_{2^{m}-1}$ in $\mathbb{R}^{n}$ we have
\begin{eqnarray}\label{tree system inequality}
\max_{1 \leq l < 2^{m}}  \Bigl| \sum^{l}_{N = 1} f_{\sigma(N)}(x) \Bigr|  \,\,  \geq  \,\,  \frac{1}{3}  \sum^{2^{m}-1}_{N = 1}   \left|f_{N}(x)\right|.
\end{eqnarray}
for all $x \in \mathbb{R}^{n}$.
\end{lemma}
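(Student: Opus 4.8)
The plan is to fix $x \in \mathbb{R}^n$ with $f_1^{(0)}(x) \ne 0$ (otherwise the right-hand side of \eqref{tree system inequality} vanishes and there is nothing to prove) and to work entirely with the maximal ray $\mathscr{R}(x) = \{(0,1),(1,j_1),\dots,(k,j_k)\}$ supplied by Lemma \ref{lemma-nesting}\eqref{ray-part}. By Lemma \ref{lemma-nesting}\eqref{left-right-part} and part (a), the only indices $N$ with $f_N(x) \ne 0$ are exactly $N_0 = 1, N_1 = N(1,j_1), \dots, N_k = N(k,j_k)$, the vertices on this ray; moreover the signs of $f_{N_0}(x), \dots, f_{N_{k-1}}(x)$ are dictated by whether the ray turns left or right, and $f_{N_k}(x)$ may have either sign. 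So the full sum $\sum_{N=1}^{2^m-1} |f_N(x)| = \sum_{i=0}^{k} |f_{N_i}(x)|$ involves only $k+1$ nonzero terms, while each partial sum $\sum_{N=1}^{l} f_{\sigma(N)}(x)$ is, after discarding the zero terms, a signed partial sum of the subsequence $f_{N_0}(x),\dots,f_{N_k}(x)$ taken in the order induced by $\sigma$ on $\{N_0,\dots,N_k\}$.

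The key combinatorial point I would isolate is the behaviour of $\sigma$ restricted to a single ray. Recall $t_N = t_j^{(k)} = (2j-1)/2^{k+1}$ and that passing from a vertex $(i,j_i)$ to its left child multiplies the "scale" by $2$ and keeps $t$ on the left half of the current dyadic interval, while passing to the right child keeps $t$ on the right half; concretely, the dyadic intervals $I_{N_0} \supset I_{N_1} \supset \dots \supset I_{N_k}$ (with $I_j^{(k)} = [(j-1)2^{-k}, j2^{-k})$, say) are nested, $t_{N_i}$ is the midpoint of $I_{N_i}$, and $I_{N_{i+1}}$ is the left or right half of $I_{N_i}$ according as the ray turns left or right at step $i$. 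From this one reads off that among the numbers $t_{N_0},\dots,t_{N_k}$, the ordering given by $\sigma$ has a "zig-zag" structure: each new $t_{N_{i+1}}$ falls either entirely to the left of all previously seen $t_{N_0},\dots,t_{N_i}$ (if the ray turns left at step $i$) or entirely to the right of all of them (if it turns right). Equivalently, when we list $N_0,\dots,N_k$ in increasing order of $t$, the result is obtained by reading the "left-turn" indices in one order and the "right-turn" indices in the reverse order, with $N_k$ inserted at one end. This is exactly the one-dimensional Haar picture of Nikishin--Ulyanov, now localized to the ray, and the dimension $n$ plays no role.

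Given this structure, I would finish as follows. Write $a_i = f_{N_i}(x)$ and let $\varepsilon_i = \operatorname{sgn}(a_i) \in \{+1,-1\}$ for $i \le k-1$, with $\varepsilon_i$ determined by the left/right turn at $(i,j_i)$; set $|a_i| = c_i \ge 0$. The left-turn steps contribute a block of consecutive $\sigma$-indices whose partial sums (reading from the middle outward to the left) accumulate the $c_i$ with $\varepsilon_i = +1$, and similarly the right-turn steps accumulate the $c_i$ with $\varepsilon_i = -1$; by a telescoping/triangle-inequality argument there is some partial sum along the $\sigma$-order equal to $\sum_{\varepsilon_i = +1} c_i - (\text{something} \le 0)$ or $-\sum_{\varepsilon_i=-1} c_i + (\text{something} \ge 0)$, and taking the larger of these two in absolute value already gives at least $\frac{1}{2}\sum_{i=0}^{k-1} c_i$; the last term $c_k = |a_k|$ is then recovered (up to the constant $\frac13$) by comparing the partial sums just before and just after the position of $N_k$ in the $\sigma$-order, since $N_k$ is inserted at an extreme end of the interval spanned by $t_{N_0},\dots,t_{N_{k-1}}$. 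Assembling the two cases and the extra term yields $\max_{1\le l < 2^m}|\sum_{N=1}^l f_{\sigma(N)}(x)| \ge \frac13 \sum_{i=0}^k c_i = \frac13\sum_{N=1}^{2^m-1}|f_N(x)|$.

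\textbf{Main obstacle.} The real work is the bookkeeping in the previous paragraph: making precise the claim that restricting the global permutation $\sigma$ to the index set $\{N_0,\dots,N_k\}$ of a ray produces exactly the "outward zig-zag" order, and then extracting a single partial sum of that zig-zag whose absolute value dominates $\frac13$ of the $\ell^1$ norm of the $a_i$. Choosing the right two candidate cut-points $l$ (one that harvests the positive-sign block, one the negative-sign block, and a careful treatment of where $N_k$ sits) and verifying the constant $\frac13$ is sharp enough is the delicate step; everything else — the reduction to a single ray, the identification of the nonzero terms, the sign encoding — is immediate from Lemma \ref{lemma-nesting}.
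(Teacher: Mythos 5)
Your reduction to the ray $\mathscr{R}(x)$ and the idea of analyzing the order that $\sigma$ induces on that ray are exactly the paper's strategy, but the structural claim on which you build the rest of the argument is false. You assert that $t_{N_{i+1}}$ falls to the left (resp.\ right) of \emph{all previously seen} $t_{N_0},\dots,t_{N_i}$ according as the ray turns left (resp.\ right) at step $i$. Counterexample: take the ray $(0,1)\to(1,1)\to(2,2)$ (root, then left child, then right child). Then $t_{N_0}=\tfrac12$, $t_{N_1}=\tfrac14$, $t_{N_2}=\tfrac38$; the ray turns right at step $1$, yet $t_{N_2}$ lies strictly between $t_{N_1}$ and $t_{N_0}$, not to the right of both. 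What \eqref{e-tdescent} and \eqref{e-tbound} actually give is weaker: every descendant of the left (resp.\ right) child of $(i,j_i)$ has $t$-value in $\bigl(t_{N_i}-2^{-i-1},\,t_{N_i}\bigr)$ (resp.\ $\bigl(t_{N_i},\,t_{N_i}+2^{-i-1}\bigr)$), so each turn positions all \emph{later} $t$-values only relative to $t_{N_i}$ itself, not relative to the earlier ones. The correct consequence — and the paper's key claim — is that any vertex of the ray where $f(x)<0$ has smaller $t$-value than any vertex where $f(x)>0$; equivalently, there is an index $l_x$ with $f_{\sigma(h)}(x)\le 0$ for $h\le l_x$ and $f_{\sigma(h)}(x)\ge 0$ for $h>l_x$. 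There is no outward zig-zag.

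The final assembly also contains a quantitatively false step: a partial sum of absolute value at least $\tfrac12\sum_{i=0}^{k-1}c_i$ need not exist. With the correct negatives-then-positives structure, writing $S_1$ for the sum of $|f_N(x)|$ over the negative vertices and $S_2$ over the positive ones, the partial sums decrease monotonically to $-S_1$ and then increase to $S_2-S_1$, so the maximal absolute partial sum is exactly $\max(S_1,|S_2-S_1|)$. Taking the ray $(0,1)\to(1,2)\to(2,3)$ with $f_{N_0}(x)=-1$, $f_{N_1}(x)=1.5$, $f_{N_2}(x)=0.5$ gives $t$-order $-1,\ +0.5,\ +1.5$, hence maximal absolute partial sum $1$, while $\tfrac12(c_0+c_1)=1.25$. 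Since $\tfrac13\sum_{i}c_i=1$ here, the constant $\tfrac13$ is sharp and no argument routed through a $\tfrac12$ bound on a subsum can succeed. The paper's finish is instead the elementary case analysis $\max(S_1,\,S_2-S_1)\ge\tfrac13(S_1+S_2)$: either $S_1\ge\tfrac13(S_1+S_2)$, or else $2S_1<S_2$, whence $S_2-S_1>\tfrac{S_2}{2}>\tfrac13(S_1+S_2)$.
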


\begin{proof}
In view of \eqref{def-tN}, we find that  
\begin{equation}\label{e-tdescent}
t^{(k+1)}_{2j-1}=t_j^{(k)}-\frac{1}{2^{k+2}},\ \ 
t^{(k+1)}_{2j}=t_j^{(k)}+\frac{1}{2^{k+2}}.
\end{equation}
Iterating over tree levels from $k+1$ to $m$, and using that $\sum_{i=k+2}^{k'} 2^{-i}< \sum_{i=k+2}^\infty 2^{-i}
= 2^{-k-1}$ for any finite $k'\geq k+2$, we see that whenever $N' = N'(k', j')$ is a descendant 
of $N = N(k,j)$ in the binary tree, the corresponding numbers 
$t_{j'}^{(k')}$ obey
\begin{equation}\label{e-tbound}
t_{j'}^{(k')}\in \left(t_j^{(k)}-2^{-k-1}, t_j^{(k)}+2^{-k-1} \right).
\end{equation}
\vskip0.1in 

\noindent Let $x\in Q$, and assume that $f_1(x)\neq 0$ since otherwise there is nothing to prove. Define
\begin{eqnarray}\label{def-lx}
l_{x} := \max  \left\{ h : 1 \leq h < 2^{m},\  f_{\sigma(h)}(x) < 0\right\},
\end{eqnarray}
with the convention that $l_x=0$ if the set above is empty. It follow immediately from (\ref{def-lx}) that $f_{\sigma(h)}(x)\geq 0$ for all $h  >  l_{x}$. 
We claim that, furthermore,
\begin{eqnarray} \label{tree system claim}
f_{\sigma(h)}(x)  \leq  0 \ \  \forall  \,\,  h \leq l_{x}.
\end{eqnarray}
To prove this, suppose 
for contradiction that there exists an $h\leq l_x$ such that $f_{\sigma(h)}(x) >0$. By (\ref{def-lx}), we cannot have
$h=l_x$. Let 
$$
\sigma(l_{x})  = N(k, j), \qquad   \sigma(h) = N(s, i).
$$
Consider the ray $\mathscr{R}(x)$ defined in Lemma \ref{lemma-nesting} \eqref{ray-part}. By definition, $\mathscr{R}(x)$ contains the vertex $\sigma(l_x)$.  
If $\sigma(h)\notin \mathscr{R}(x)$, then $f_{\sigma(h)}(x) =0$ and the claim is true. Thus we are reduced to the case where $\sigma(h)$ and $\sigma(l_x)$ both lie on $\mathscr{R}(x)$. 
\vskip0.1in
\noindent Suppose that ${\sigma(h)}$ is a descendant of ${\sigma(l_x)}$. Since $f_{\sigma(l_x)}(x)<0$, Lemma \ref{lemma-nesting} \eqref{left-right-part} 
dictates that the ray $\mathscr{R}(x)$ turns right at $\sigma(l_x)$, so that ${\sigma(h)}$ must be either 
$N(k+1, 2j)$ or one of its descendants. By (\ref{e-tbound}) and then (\ref{e-tdescent}), we have 
$$
t_{\sigma(h)} > t^{(k+1)}_{2j}-\frac{1}{2^{k+2}}= t_j^{(k)}= t_{\sigma(l_{x})} ,
$$
which contradicts the assumption that $h< l_x$ and therefore $t_{\sigma(h)}< t_{\sigma(l_{x})}$.
\vskip0.1in
\noindent Finally, consider the case when ${\sigma(l_x)}$ is a descendant of ${\sigma(h)}$. If $f_{\sigma(h)}(x) >0$,
then $\mathscr{R}(x)$ turns left at ${\sigma(h)}$, so that ${\sigma(l_x)}$ must be either
$N(s+1, 2i-1)$ or one of its descendants. Then, again by (\ref{e-tbound}) and then (\ref{e-tdescent}), we have 
$$
t_{\sigma(l_x)} < t^{(s+1)}_{2i-1}+\frac{1}{2^{s+2}}= t_i^{(s)}= t_{\sigma(h)} ,
$$
again contradicting our assumptions.

\begin{figure}[htbp]\centering\includegraphics[width=4.in]{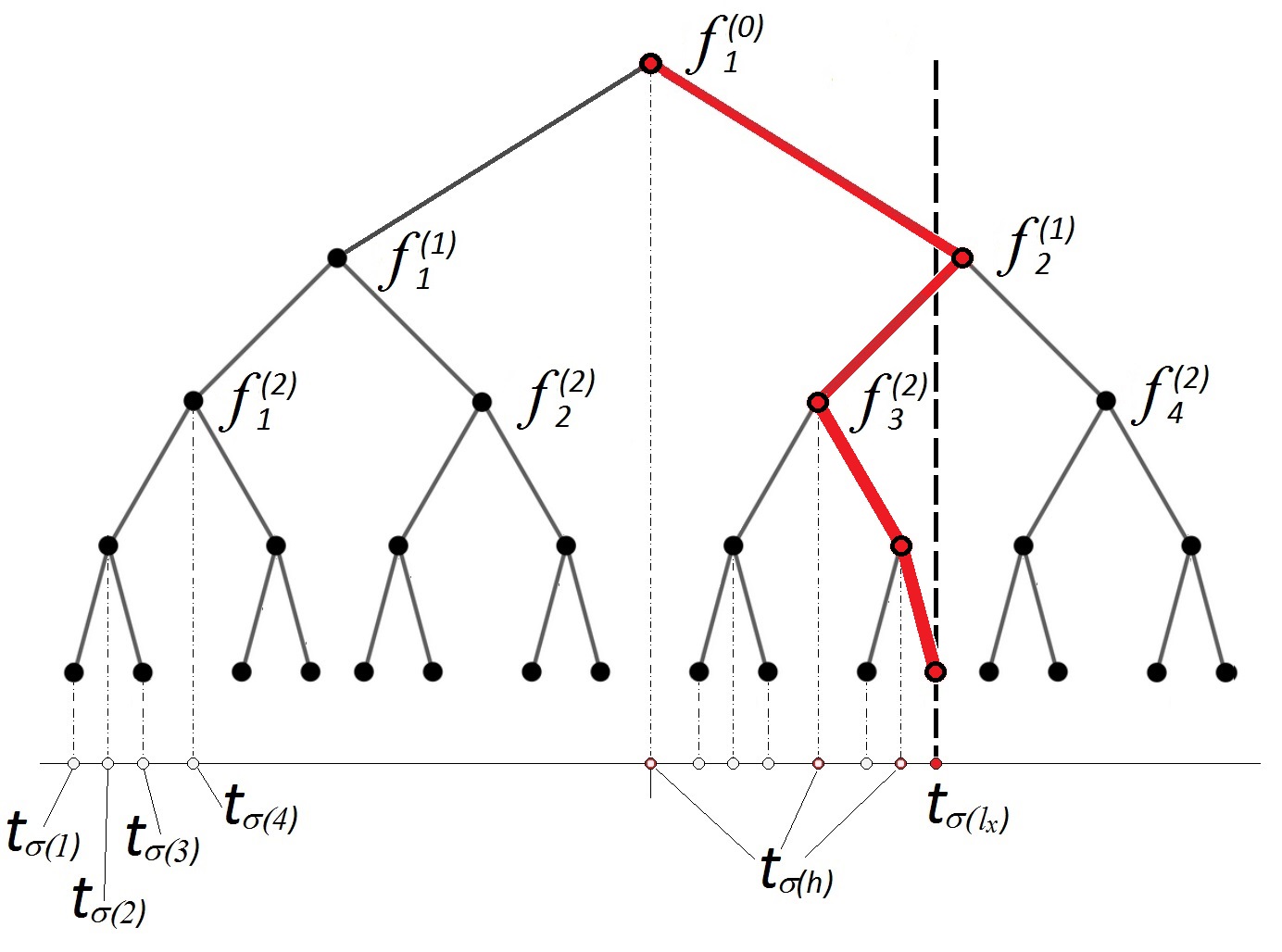}
\caption{Choice of $l_x$ for the example in Section \ref{l-x-example-section} in the case $f^{(4)}_{12}(x) < 0$.
The vertices in $\mathscr{R}^{\ast}(x)$ lie on the dashed line and to the left of it.}
\label{fig4}
\end{figure}

\vskip0.1in
\noindent To recap, we have established the existence of an integer $l_x\geq 0$ such that
\begin{eqnarray} \label{l-x-again}
\left\{ \begin{array}{ll}
f_{\sigma(h)}(x)  \leq  0  & \forall \,\, h \leq l_{x}\\
f_{\sigma(h)}(x)  \geq  0  & \forall \,\, h > l_{x}.\\
\end{array} \right.
\end{eqnarray}
Let
$$
S_1=\sum_{N=1}^{l_{x}} -f_{\sigma(N)}(x),\ \ 
S_2=\sum_{N=l_{x}+1}^{2^m-1} f_{\sigma(N)}(x),
$$
with the convention that $S_1=0$ if $l_x=0$.
Then $S_1,S_2\geq 0$,
\begin{align*}
\sum^{2^{m}-1}_{N = 1}   \left|f_{N}(x)\right| &=S_1+S_2, \text{ and } \\
\max_{1 \leq l < 2^{m}}  \Bigl| \sum^{l}_{N = 1} f_{\sigma(N)}(x) \Bigr|
&= \max (S_1,S_2-S_1).
\end{align*} 
If $S_1\geq \frac{1}{3}(S_1+S_2)$, then \eqref{tree system inequality} follows immediately.
Suppose now that $S_1 <\frac{1}{3}(S_1+S_2)$. Then $S_2> \frac{2}{3}(S_1+S_2)$,
and furthermore, $3S_1<S_1+S_2$ so that $2S_1<S_2$. Hence
$$
S_2-S_1>S_2-\frac{S_2}{2}=\frac{S_2}{2}>\frac{1}{3}(S_1+S_2),
$$
and (\ref{tree system inequality}) again follows.
\end{proof}

\subsubsection{An example} \label{l-x-example-section} The permutation $\sigma$ arranges $t_{\sigma(N)}$ in increasing order. The integer $l_x$ used in Lemma \ref{Karagulyan's tree systems lemma} then has a geometric interpretation in terms of the binary tree $\mathcal T_m$. Given $x \in Q$ and the ray $\mathscr{R}(x)$ as in Lemma \ref{lemma-nesting}(\ref{ray-part}), let $\mathscr{R}^{\ast}(x)$ be the subcollection of vertices on $\mathscr R(x)$ where the ray turns right. The maximal element $(k, j_k)$ is included in $\mathscr{R}^{\ast}(x)$ if and only if $f^{(k)}_{j_k}(x) < 0$. Since the right child (and all its descendants) of any vertex $N$ generate larger $t$-values than $N$ itself, the relation \eqref{def-lx} defining $l_x$ is equivalent to the condition that  $\sigma(l_x) = \max\{N : N \in \mathscr{R}^{\ast}(x)\}$.  
\vskip0.1in
\noindent We explain the choice of $l_x$ in the context of an example given by Figure \ref{fig4}, with $m=5$. Let $x \in Q$ be a point such that \[ \mathscr{R}(x) = \{(0,1), (1,2), (2,3), (3, 6), (4, 12) \}.  \] Then \[ \mathscr{R}^{\ast}(x) = \begin{cases} \{(0,1), (2, 3), (3, 6), (4, 12)  \} &\text{ if }  f_{27}(x) = f^{(4)}_{12}(x) < 0, \\  \{(0,1), (2, 3), (3, 6)  \} &\text{ if }  f_{27}(x) = f^{(4)}_{12}(x) > 0, \end{cases} \]
and hence \[ \sigma(l_x) = \begin{cases} N(4, 12) = 27 &\text{ if } f_{27}(x) < 0, \\ N(3, 6) = 13 &\text{ if }  f_{27}(x) > 0. \end{cases} \]     

\bigskip

\section{Proof of Theorem \ref{our Theorem2}}\label{proof of thm}
\noindent A sector in $\mathbb{R}^{n}$ is an open conic region in Euclidean space bounded by a finite number of hyperplanes passing through the origin. More precisely, 
\begin{defn} \label{Definition of sector}
Let $v_{1}, v_{2}, \ldots v_{r}$ be distinct unit vectors in
$\mathbb{R}^{n}$, and fix an integer $s \leq r$. A \emph{sector} in
$\mathbb{R}^{n}$ is a nonempty set of the form 
\begin{eqnarray}
X = \Big\{x \in \mathbb{R}^{n} : \begin{array}{ll}
x \cdot v_{j} > 0  & \forall \,\, 1 \leq j \leq s\\
x \cdot v_{j} < 0  & \forall \,\, s < j \leq r\\
\end{array}\Big\}
\end{eqnarray}
\end{defn}
\noindent Note that if $x \in X$, then $tx \in X$ for any $t > 0$. Thus a sector
is infinite with nonempty interior, by definition. 
\vskip0.1in
\noindent We record in Section \ref{building-block-section} two results (Proposition \ref{building blocks proposition} and Lemma \ref{Lemma 9}), one analytic and the other geometric, concerning sectors. These results are critical components of the proof of Theorem \ref{our Theorem2}. We present the proof of this main theorem later in Section \ref{main-proof-section}, modulo the two ingredients.  The proofs of the two building blocks appear later in the paper (in Section \ref{building-block-proof-section} for Proposition \ref{building blocks proposition} and Section \ref{geometric-section} for Lemma \ref{Lemma 9}). 
\subsection{The ingredients of the proof} \label{building-block-section}
\begin{prop} \label{building blocks proposition}
For any choice of integer $p_{0} \geq 1$,  there exist constants
$C_{1}, C_{2}, C_{3} > 0$ that depend only on $p_0$ and the ambient
dimension $n$ and satisfy the properties listed below. 
\vskip0.1in
\noindent Let $\{X_{N}: N = 1, 2, \ldots, 2^m-1\}$ be any finite collection of pairwise disjoint
sectors in $\mathbb{R}^{n}$.  Then there exists
a corresponding sequence $\{f_{N} : N=1, \ldots, 2^{m} -  1\}$ of smooth, integrable functions with
compactly supported Fourier transforms such that:
\begin{enumerate}[(a)]
\item \label{support-cone} $\supp \widehat{f}_{N} \subset X_{N}$ for each $N$.
\vskip0.1in
\item  \label{f-pnorm} For each $p \in [1, 2p_0]$, 
\begin{equation} 
    \Big\|  \sum^{2^{m} - 1}_{N = 1} f_{N} \Big\|_p \leq
    C_{1} \sqrt{m}. 
\end{equation}   
\item \label{f-sup-norm} For the permutation $\sigma$ defined in \eqref{definition of sigma} and
used in Lemma \ref{Karagulyan's tree systems lemma},  
\begin{equation} \label{large on a large set}
\Bigl|\Bigl\{x \in Q : \max_{1 \leq l < 2^{m}} \Bigl| \sum^{l}_{N = 1}
f_{\sigma(N)}(x) \bigr|   \geq   C_{2} m \Bigr\}\Bigr| > C_{3}.
\end{equation} 
\end{enumerate}
\end{prop}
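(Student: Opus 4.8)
The plan is to realize the $f_N$ as modulated band-limited bumps indexed by the binary tree $\mathcal{T}_m$, behaving like Haar functions lifted to rapidly increasing frequency scales. Set $f_N(x)=a_N(x)\,e(\xi_N\cdot x)$, where $a_N$ is a real, smooth, rapidly decreasing function with $\widehat{a_N}$ supported in a ball $B(0,\rho_N)$, and $\xi_N$ is a frequency lying deep inside $X_N$; since $X_N$ is open and conic, taking $|\xi_N|$ large makes $B(\xi_N,\rho_N)\subset X_N$, which establishes property (a). The envelopes are built by downward induction on $\mathcal{T}_m$: $a_1$ is a fixed band-limited bump essentially equal to $\chi_{Q}$ (essential support $E_1=Q$), and given $a_N$ with essential support $E_N$, the two children of $N$ are assigned essential supports $E_{2N}=E_N\cap\{\mathrm{Re}\,f_N>0\}$ and $E_{2N+1}=E_N\cap\{\mathrm{Re}\,f_N<0\}$. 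We impose the scale hierarchy $|\xi_{N^{\ast}}|\ll\rho_N\ll|\xi_N|$ — so that $a_N$ is smooth enough to be adapted to the oscillation humps of $\mathrm{Re}\,f_{N^{\ast}}=a_{N^{\ast}}\cos(2\pi\xi_{N^{\ast}}\cdot x)$, which occur at spatial scale $|\xi_{N^{\ast}}|^{-1}$ — together with a super-lacunarity condition on $\{|\xi_N|\}$ needed for (b); all of these can be met by choosing the frequencies one at a time, with growth rate allowed to depend on $\{X_N\}$. With such choices the truncations $\varphi_N:=(\mathrm{Re}\,f_N)\,\chi_{E_N}$ form an exact signed tree system in the sense of Definition \ref{tree-def}, and one can arrange $a_N\approx 1$ on a core $E_N^{\mathrm{core}}\subset E_N$ with the measure lost passing to children summable over levels; hence $\sum_{\text{leaves }N}|E_N^{\mathrm{core}}|\ge c_0$ for some $c_0=c_0(n)>0$ independent of $m$.

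For the $L^p$ bound (b) I would first reduce to $p=2p_0$, an even integer: since every $f_N$ is essentially supported in $Q$ and $|Q|=1$, the estimate for $2p_0$ yields it for all $p\in[1,2p_0]$ by H\"older, the spatial tails being negligible. Expanding $\bigl\|\sum_N f_N\bigr\|_{2p_0}^{2p_0}$, a typical term $\int f_{N_1}\cdots f_{N_{p_0}}\,\overline{f_{M_1}\cdots f_{M_{p_0}}}$ vanishes unless $\xi_{N_1}+\cdots+\xi_{N_{p_0}}-\xi_{M_1}-\cdots-\xi_{M_{p_0}}$ lies within $O(\sum\rho_{N_i}+\sum\rho_{M_j})$ of the origin; a sufficiently super-lacunary choice of $\{|\xi_N|\}$ forces the multisets $\{N_i\}$ and $\{M_j\}$ to coincide, so only diagonal terms survive. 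Bounding these by AM--GM reduces the sum to $\lesssim_{p_0}\int\bigl(\sum_N|a_N|^2\bigr)^{p_0}$, that is $\bigl\|\sum_N f_N\bigr\|_{2p_0}\lesssim_{p_0}\bigl\|(\sum_N|a_N|^2)^{1/2}\bigr\|_{2p_0}$, and the claim follows from the pointwise bound $\sum_N|a_N(x)|^2\lesssim m$ valid for a.e.\ $x$ — which holds because, up to rapidly decaying tails, the indices $N$ with $a_N(x)$ non-negligible trace out a single root-to-leaf path in $\mathcal{T}_m$, and paths have length $\le m$.

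For \eqref{large on a large set} I would apply Lemma \ref{Karagulyan's tree systems lemma} to the signed tree system $\{\varphi_N\}$ and the permutation $\sigma$ of \eqref{definition of sigma}, obtaining $\max_{l}\bigl|\sum_{N\le l}\varphi_{\sigma(N)}(x)\bigr|\ge\tfrac13\sum_N|\varphi_N(x)|$ for all $x$. Since $\bigl|\sum_{N\le l}f_{\sigma(N)}(x)\bigr|\ge\bigl|\sum_{N\le l}\varphi_{\sigma(N)}(x)\bigr|-\mathcal{E}(x)$ with $\mathcal{E}(x):=\sum_N|\mathrm{Re}\,f_N(x)-\varphi_N(x)|$ the total tail contribution, this transfers to $\max_l\bigl|\sum_{N\le l}f_{\sigma(N)}(x)\bigr|\ge\tfrac13\sum_N|\varphi_N(x)|-\mathcal{E}(x)$, where the tails are chosen small enough in $L^1$ that $\mathcal{E}\le 1$ off a set of measure $<c_0/4$. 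It then remains to show $\sum_N|\varphi_N(x)|\gtrsim m$ on much of $\bigsqcup_{\text{leaves }N} E_N^{\mathrm{core}}$. For $x$ in a leaf core $E_N^{\mathrm{core}}$ with ancestry $1=N_0,N_1,\dots,N_{m-1}=N$, one has $\sum_M|\varphi_M(x)|\ge\tfrac12\sum_{k=0}^{m-1}|\cos(2\pi\xi_{N_k}\cdot x)|$; the $m$ functions $x\mapsto|\cos(2\pi\xi_{N_k}\cdot x)|$ have mean $\tfrac2\pi$ over $E_N^{\mathrm{core}}$ and are essentially uncorrelated, their frequencies being lacunarily separated at scales respected by the features of $E_N^{\mathrm{core}}$, so a second-moment (Chebyshev) estimate gives $\sum_k|\cos(2\pi\xi_{N_k}\cdot x)|\ge\tfrac1\pi m$ off a relative-measure-$O(1/m)$ subset of each $E_N^{\mathrm{core}}$. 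Summing over leaves and using $\sum_N|E_N^{\mathrm{core}}|\ge c_0$ yields \eqref{large on a large set} for $m$ large; the case of bounded $m$ is arranged trivially by shrinking $C_2,C_3$.

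I expect the main difficulty to be the tension built into the hypotheses: requiring $\widehat{f_N}$ to be compactly supported makes each $f_N$ real-analytic, hence forces spatial tails, which conflict with the exact support conditions in the definition of a signed tree system and with a clean pointwise bound $\sum_N|a_N|^2\le m$. Overcoming this means tuning the parameters — the growth of $\rho_N$ relative to $|\xi_{N^{\ast}}|$, the decay of the Fourier profiles of the $a_N$, and the loss rates defining the cores — so that all $L^1$, $L^2$ and pointwise tail contributions are summable and dominated by the main terms, and then running every estimate for the truncated system $\{\varphi_N\}$ and transferring back. A secondary technical point is the verification of the equidistribution and near-orthogonality of the functions $|\cos(2\pi\xi_{N_k}\cdot x)|$ on the sets $E_N^{\mathrm{core}}$, which are porous unions of nested slabs living at many lacunary scales; this is precisely where the hierarchy $|\xi_{N^{\ast}}|\ll\rho_N\ll|\xi_N|$ is used, ensuring that $E_N^{\mathrm{core}}$ looks flat at the scale of whichever cosine is being integrated.
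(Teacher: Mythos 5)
Your overall architecture coincides with the paper's: modulated, band-limited approximations to indicators of a nested tree of sets $E_N$, Karagulyan's lemma applied to the associated signed tree system for the lower bound, and a Fourier-support separation argument for the $L^{2p_0}$ bound. Parts (a) and (c) are essentially sound (for (c) the paper takes a simpler route than your second-moment equidistribution on leaf cores: it arranges $\int_{E_N}|\cos(2\pi x\cdot \bar p_N)|\,dx>|E_N|/3$ by Riemann--Lebesgue during the construction, sums to get $\int_Q\sum_N|\tilde f_N|\ge m/3$, and combines this with the pointwise bound $\sum_N|\tilde f_N|\le\sum_N\chi_{E_N}=m$ and Chebyshev; your equidistribution claims would need a similar Riemann--Lebesgue justification but are not wrong in spirit).

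The genuine gap is in part (b), in the sentence ``a sufficiently super-lacunary choice of $\{|\xi_N|\}$ forces the multisets $\{N_i\}$ and $\{M_j\}$ to coincide, so only diagonal terms survive.'' This is incompatible with the scale hierarchy you yourself impose. By the uncertainty principle, $a_N$ can only be adapted to $E_N$ (whose finest features are slabs of width $\sim|\xi_{N^\ast}|^{-1}$) if $\rho_N\gtrsim|\xi_{N^\ast}|$; hence for $N$ beyond the first levels, $\rho_N$ dwarfs all frequencies $|\xi_M|$ with $M<N$. Consequently, the support condition $\bigl|\sum_i\xi_{N_i}-\sum_j\xi_{M_j}\bigr|\le\sum_i\rho_{N_i}+\sum_j\rho_{M_j}$ pins down only the \emph{maximal} index (it must occur with equal multiplicity on both sides); once its frequency contribution cancels, the residual mismatch among the smaller indices is automatically absorbed by the bandwidth $\rho_{N_{\max}}$ remaining on the right-hand side, so no further matching is forced. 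And these surviving off-diagonal terms are not negligible: a term like $\int|f_{N_{\max}}|^2 f_{N_1}\overline{f_{M_1}}$ with $N_1\ne M_1$ on the same ray is essentially $\widehat{\chi}_{E_{N_{\max}}}(\xi_{M_1}-\xi_{N_1})$, which is comparable to a constant times $|E_{N_{\max}}|$ because $\chi_{E_{N_{\max}}}$ itself carries Fourier mass at integer combinations of the ancestor frequencies. The paper handles exactly this: after matching the top index it \emph{sums over it} using the tree identity $\sum_{m_r}\chi_{m_r}=\chi_{N_0}$ (Lemma \ref{Lemma 4}\eqref{sum-over-ht}), collapsing the product to one involving only the smaller indices, and only then re-applies the Fourier-support separation — crucially, condition \eqref{cubespread} is imposed for every sub-tuple $J$, with the bandwidth budget $\sum_{j\in J}\ell_j$ restricted to the indices of that sub-tuple. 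This peel-and-recurse step (Lemmas \ref{Lemma 5} and \ref{Lemma 8}, formula \eqref{recursion}) is the missing idea in your argument; without it the off-diagonal contributions with matching top index are uncontrolled.
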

\noindent {\em{Remark: }} The functions $f_N$ given by Proposition \ref{building blocks proposition} do not form a tree system as defined in Section \ref{tree-systems-subsection}. However, there are sequences of functions closely related to Re$(f_N)$ that are in fact tree systems or signed tree systems. We elaborate on these connections in Section \ref{building-block-proof-section} where we prove the proposition; see specifically Lemma \ref{Lemma 4} \eqref{EN-tree-system} and \eqref{fN-tilde}. 
\vskip0.1in 
\begin{lemma} \label{Lemma 9}
Let $U$ be a set of unit vectors in $\RR^n$, all pointing in distinct directions. 
Assume that $\# U = M$ for some $M\geq 2$, and that
all vectors $v\in U$ obey $v\cdot e_n>0$, where $e_n=\left(0, \ldots 0, 1\right)$.
Then there is an ordering $\{u_1,\dots,u_{M}\}$ of vectors in $U$, and a collection of
pairwise disjoint sectors $S_{1}, \ldots S_{M-1}\subset \RR^{n}$ (see Definition \ref{Definition of sector}), such that, up to sets of
Lebesgue measure 0, we have for $l=2,\dots,M$
\begin{equation}\label{sector-inclusion-b}
\Gamma_{u_l} \cap S_i =
 \begin{cases} 
S_i &\text{ if } i < l, \\ \emptyset &\text{ if } i \geq l.
\end{cases} 
\end{equation}
\end{lemma}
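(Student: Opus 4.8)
The plan is to build the ordering and the sectors inductively by "peeling off" one vector at a time, always choosing at each stage a vector $u$ which is extremal in a suitable sense among the remaining vectors. The key observation is that $\Gamma_{u_l} \cap S_i$ equals either $S_i$ or $\emptyset$ precisely when $S_i$ lies entirely in the open half-space $\Gamma_{u_l}$ or entirely in its complement $\{x \cdot u_l < 0\}$; so the sectors $S_i$ must be chosen fine enough that each hyperplane $\{x\cdot u_l = 0\}$ misses the interior of $S_i$, except possibly on a measure-zero set. Since we are allowed to discard sets of measure zero, it is enough to produce sectors (open conic regions cut out by the hyperplanes $v^\perp$, $v\in U$) such that $S_i \subset \Gamma_{u_l}$ for $i<l$ and $S_i \cap \Gamma_{u_l} = \emptyset$ for $i\geq l$.

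First I would set up the right notion of "extremal" vector. Fix the plane $H = \{x\cdot e_n = 1\}$ and identify each $v\in U$ with the affine hyperplane $v^\perp \cap H$ inside $H\cong \RR^{n-1}$ (this is legitimate because $v\cdot e_n>0$ forces $v^\perp$ to be transverse to $e_n$). These $M$ affine hyperplanes partition $H$ into finitely many open cells; each cell, coned off from the origin, is a sector in the sense of Definition~\ref{Definition of sector}, and on any such cell each function $x\mapsto \mathrm{sgn}(x\cdot v)$ is constant. The inductive step is: among the remaining vectors $\{u_l,\dots,u_M\}$, choose $u_l$ so that there is a cell $S_{l-1}$ (coned) lying on the negative side of $u_l$ but on the positive side of every $u_{l'}$ with $l'>l$. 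Concretely, pick the remaining vector whose hyperplane $v^\perp\cap H$ is "outermost" — e.g.\ order the remaining vectors by the angle their hyperplanes make, or more robustly: since the remaining hyperplanes divide $H$ into cells, there is always a cell $S_{l-1}$ adjacent to $u_l^\perp$ that lies in $\bigcap_{l'>l}\Gamma_{u_{l'}}$ and in $\{x\cdot u_l<0\}$, as long as we choose $u_l$ to be a vector "on the boundary" of the region $\bigcap_{l'}\Gamma_{u_{l'}}$. I would formalize this by induction on $M$: remove the vector $u_M$ so that $\bigcap_{v\ne u_M}\Gamma_v$ has a face on $u_M^\perp$ that is $(n-1)$-dimensional — such $u_M$ exists since the cone $\bigcap_{v\in U}\Gamma_v$ is a proper subcone — define $S_{M-1}$ to be a full-dimensional cell squeezed between $u_M^\perp$ and that face, on the negative side of $u_M$, then apply the inductive hypothesis to $U\setminus\{u_M\}$ to order the rest and produce $S_1,\dots,S_{M-2}$ inside $\bigcap_{v\ne u_M}\Gamma_v \subset \Gamma_{u_M}$. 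Disjointness of the $S_i$ is automatic if they are taken to be distinct cells of the final common refinement (or one checks it is preserved through the induction, shrinking earlier sectors if necessary so they avoid $u_M^\perp$).

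The details that need care — and the step I expect to be the main obstacle — are (i) guaranteeing at each stage that a \emph{full-dimensional} cell $S_{l-1}$ of the required type genuinely exists, which amounts to verifying that $u_l^\perp$ contributes a facet (not a lower-dimensional face) to the current intersection of half-spaces; and (ii) arranging that the previously constructed sectors $S_1,\dots,S_{l-2}$ remain valid, i.e.\ stay inside $\Gamma_{u_l}$ and disjoint from the new $S_{l-1}$, after the new hyperplane $u_l^\perp$ is removed from consideration. Point (i) is where the $n\geq 3$ case is genuinely more delicate than the planar case: in $\RR^2$ the vectors can simply be sorted by angle, but in higher dimensions one must argue, e.g.\ via a generic linear functional or by an extreme-point argument on the dual cone, that some remaining vector has the extremal property. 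I would handle (ii) by always building $S_1,\dots,S_{l-2}$ as cells of the refinement induced by the \emph{already chosen} vectors $u_1,\dots,u_{l-1}$ only, so that removing $u_l$ does not disturb them; combined with the inductive structure this keeps the bookkeeping clean. Once both points are settled, \eqref{sector-inclusion-b} follows directly from the defining property "$S_i$ on the positive side of $u_{l'}$ for $l'>i$, on the negative side of $u_{l'}$ for $l'\leq i$", which is exactly what the inductive construction enforces, and the measure-zero caveat absorbs the boundary hyperplanes.
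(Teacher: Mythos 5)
Your reduction of the lemma to producing, for a single ordering, nonempty sectors realizing the monotone chain of sign patterns ($S_i$ on the negative side of $u_1,\dots,u_i$ and the positive side of $u_{i+1},\dots,u_M$) is the right target, and you state that pattern correctly in your last paragraph. But the inductive construction you describe does not produce it. At stage $l$ you place $S_{l-1}$ on the \emph{negative} side of $u_l$, whereas \eqref{sector-inclusion-b} with $i=l-1<l$ forces $S_{l-1}\subset\Gamma_{u_l}$; and you impose no condition on $S_{l-1}$ relative to $u_2,\dots,u_{l-1}$, whereas the lemma forces $S_{l-1}\cap\Gamma_{u_{l'}}=\emptyset$ for all $l'\le l-1$. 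No relabelling or reversal of the ordering reconciles your cells with the required pattern. More importantly, even after fixing the signs, the two issues you flag as (i) and (ii) are exactly where the content of the lemma lies, and neither is resolved: picking an irredundant (facet-defining) constraint at each stage controls the new cell only with respect to the \emph{not-yet-chosen} vectors, so nothing guarantees that the new cell lies on the prescribed side of the already-chosen hyperplanes, nor that the already-built cells lie in $\Gamma_{u_l}$ for the newly chosen $u_l$. Your proposed remedy --- building the earlier sectors only from the refinement induced by the already-chosen vectors --- cannot work, because each final sector must carry a definite sign with respect to \emph{every} $u_l$ in the relevant range, not just those chosen before it.

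The paper's proof sidesteps the induction entirely with a sweeping argument, and this is the missing idea. Since $v\cdot e_n>0$ for all $v\in U$, a vertical line $r_{x'}=\{(x',t):t\in\RR\}$ is transverse to every hyperplane $\pi_v=\{x\cdot v=0\}$, and for $x'$ outside a null set it meets them in $M$ distinct points $P_i=(x',t_i)$ with $t_1>\dots>t_M$. Labelling $u_i$ by $P_i\in\pi_{u_i}$ and setting $S_i:=\Gamma_{u_1}^c\cap\dots\cap\Gamma_{u_i}^c\cap\Gamma_{u_{i+1}}\cap\dots\cap\Gamma_{u_M}$, the intermediate points $Q_i=(x',\tau_i)$ with $t_i>\tau_i>t_{i+1}$ satisfy $Q_i\cdot u_l=(\tau_i-t_l)(e_n\cdot u_l)$, which is negative for $l\le i$ and positive for $l>i$; hence every $S_i$ is nonempty, and disjointness and \eqref{sector-inclusion-b} are immediate from the definition. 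In other words, the single generic line simultaneously selects the ordering and witnesses the nonemptiness of all $M-1$ sectors, which is precisely what your stage-by-stage extremal-vector selection fails to guarantee. I would abandon the induction and argue along these lines (a generic segment from a point in $\bigcap_v\Gamma_v$ to a point in $\bigcap_v\Gamma_v^c$ crossing the hyperplanes one at a time serves equally well).
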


\subsection{Completion of the proof} \label{main-proof-section} \begin{proof}[Proof of Theorem \ref{our Theorem2}, assuming
      Proposition \ref{building blocks proposition} and Lemma \ref{Lemma 9}]
 As noted previously, it suffices to prove \eqref{my fond estim changed}. Let $1 < p < \infty$, and let $p_0$ be an integer
such that $p<2p_0$. Assume without loss of generality that $\#U$ is sufficiently large relative to $p_0$, since the bound \eqref{my fond estim changed} is trivial otherwise. 
By rotational symmetry, we may assume (after passing to a subset of cardinality at least $(\#U)/2$ if necessary)
that all vectors $v\in U$ obey $v\cdot e_n>0$, where $e_n=\left(0, \ldots 0, 1\right)$. Passing to a further 
subset $\widetilde{U} \subseteq U$, we may also assume that $\#\widetilde{U} = 2^{m}$ with $m\in\NN$ and $m\geq 0.1 \log (\#U)$. Since $T_U$ dominates $T_{\widetilde{U}}$, we will henceforth work with $\widetilde{U}$, renaming it $U$. 
\vskip0.1in
\noindent Lemma \ref{Lemma 9} now yields an ordering $\{u_1,\dots,u_{2^m}\}$ of vectors in $U$, and a collection of
non-empty and pairwise disjoint sectors $S_{1}, \ldots S_{2^{m}-1}\subset \RR^{n}$, such that
for $l=2,\dots,2^m$ we have
\begin{equation}\label{sector-inclusion}
\Gamma_{u_l} \cap S_i =
 \begin{cases} 
S_i &\text{ if } i < l, \\ \emptyset &\text{ if } i \geq l. 
\end{cases} 
\end{equation}
We now apply Proposition \ref{building blocks proposition} to
the sectors $X_{N} := S_{\sigma^{-1}(N)}$ for $N = 1, \ldots 2^{m}-1$. Let $f=\sum_{N=1}^{2^m-1} f_N$, where
$f_N$ are the functions provided by Proposition \ref{building blocks proposition}. By (\ref{f-pnorm}), we have
\begin{equation} \label{f-pnorm2}
   \|f\|_p \leq C_{1} \sqrt{m}. 
\end{equation} 
On the other hand, by Proposition \ref{building blocks proposition}(a), we have 
$\supp \widehat{f}_{\sigma(N)}  \subset X_{\sigma(N)}= S_{N} $. Using this and (\ref{sector-inclusion}), we get 
that for $l=2,\dots,2^{m}$,
\begin{align*}
T_{u_l}f &= (\chi_{\Gamma_{u_l}} \widehat{f})^\vee
= \Big( \sum_{N=1}^{2^m-1} \chi_{\Gamma_{u_l}} \widehat{f_{\sigma(N)}} \Big)^\vee
= \Big( \sum_{N=1}^{l-1} \widehat{f_{\sigma(N)}} \Big)^\vee
= \sum_{N=1}^{l-1} {f_{\sigma(N)}}.
\end{align*}
Hence
$$
T_U f(x) \geq \max_{2\leq l \leq 2^m}T_{u_l}f 
=\max_{1\leq l \leq 2^m-1}  \Big| \sum_{N =1}^{l} {f_{\sigma(N)}} (x)\Big|.
$$
By Proposition \ref{building blocks proposition}(c), it follows that
$$
|\{x\in Q: T_Uf(x) \geq C_2 m\}| > C_3,
$$
so that for any $1 < p < \infty$, we have
$$
\|T_Uf\|_p \geq C_2 C_3^{1/p} m.
$$
The estimate \eqref{my fond estim changed} follows from this and (\ref{f-pnorm2}).
 \end{proof} 

\section{Proof of Proposition \ref{building blocks proposition}} \label{building-block-proof-section}
\subsection{The inductive construction of functions} \label{construction-section} Proposition
\ref{building blocks proposition} asserts the existence of certain
functions $f_N$; these will be of the following form,
\begin{equation} \label{def-fN} f_N(x) := e(\bar{p}_N \cdot x) g_N(x) \quad
  \text{ with } \quad g_N := \phi_{\ell_N} \ast \chi_{E_N}. \end{equation}  
We pause for a moment to clarify the notation in the preceding
line. Here $\phi_{\ell}(x) := \ell^{n}\phi(\ell x)$, and
$\phi$ is a Schwartz function on $\RR^{n}$ such that
\begin{eqnarray*}
\phi \geq 0, \; \int_{\mathbb{R}^{n}} \phi(x)dx = 1 \; \text{ and } \;  \text{supp}(\widehat{\phi})\subset \left[-1,1\right]^{n}. 
\end{eqnarray*} The sets
$E_N$, the parameters $\ell_N$ and the vectors $\bar{p}_N$ appearing
in \eqref{def-fN} will be specified shortly in Proposition \ref{Lemma 3} below using an inductive mechanism and in the sequential order \[ E_1 \rightarrow
\ell_1 \rightarrow \bar{p}_1 \rightarrow E_2 \rightarrow \ell_2
\rightarrow \bar{p}_2 \rightarrow \cdots, \]  subject to the defining
condition $E_1 := Q = [0,1]^n$, and
\begin{equation}\label{def-esets}
E_{N}  =  E^{(k)}_{j} := \big\{ x \in E_{N^{\ast}} : (-1)^{j-1}  \cos \left( 2\pi x \cdot \bar{p}_{N^{\ast}} \right) > 0 \big\},
\quad N\geq 2, \quad N^{\ast} = \lfloor \frac{N}{2}\rfloor. 
\end{equation} 
As we will see, the parameter $p_N$ specifies the ``location'' and $\ell_N$ the ``size'' of the frequency support of $f_N$. These frequency supports will obey a number of constraints, one of which is pairwise disjointness. On the other hand, the spatial support of $f_N$, while not perfectly localized, is essentially contained in $E_N$. The set $E_N$ will be shown to be nonempty and of positive measure, for every $N$. Here for sets as well as functions, we will continue to use the
double-indexing notation from Section \ref{tree-section}, identifying $N$ with the pair $(k,j)$ as given by the relation \eqref{e-jk}. 
We will also use $Q(\ell, x)$ to
denote the axis-parallel cube with centre $x$ and side length $\ell$. For a multi-index $J=(j_1,\dots,j_h)\in \mathbb{N}^h$,
we will write $\|J\|_\infty=\max_i j_i$; additionally, if $J=(J_1,J_2)$ is a pair of such multi-indices, we will use
$\|J\|_\infty$ to denote $\max(\|J_1\|_\infty,\|J_2\|_\infty)$. 

\vskip0.2in

\begin{prop} \label{Lemma 3} 
Let $p_{0}$ and  $\{X_{N}: N = 1, 2, \ldots 2^{m} - 1\}$ be as in
Proposition \ref{building blocks proposition}. For any sufficiently
large $C\gg p_{0}$, there exists a choice of  large constants
$\ell_{N}$ and vectors $\bar{p}_{N} \in \mathbb{Z}^{n}$ of large
magnitude such that for all $N=1,\dots,2^m-1$ the following properties
hold. 
\begin{enumerate}[(a)]
\item \label{in-sector} $Q_{N}   :=   Q  \left(\ell_{N}, \bar{p}_{N}\right)  \subset
  X_{N}$.
\vskip0.1in 
\item \label{intersection} Given any $1 \leq h \leq p_0$, and any $2h$-tuple of
  indices $J = (J_1, J_2) \in \{1, \ldots, N\}^{2h}$, with $J_1 =
  (j_1, \ldots, j_h)$, $J_2 = (j_1', \ldots, j_{h}')$, we have 
\begin{equation} \label{cubespread}  
\sum_{j \in J_1} Q_j \cap \sum_{j \in J_2} Q_j = \emptyset \quad \text{
  whenever } \quad  \#\{r: j_r = ||J||_{\infty} \} \neq
\#\{r: j_r' = ||J||_{\infty} \}.
\end{equation} 
Here the sum of sets denotes the Minkowski sum, where $A + B = \{a + b : a \in A, b \in B \}$. 
\vskip0.1in 
\item \label{pN-int} For $E_N$ defined as in \eqref{def-esets}, the vector
  $\bar{p}_N$ additionally satisfies   
\begin{equation} \label{ENpN-integral}\int_{E_{N}} \left|\cos \left( 
      2\pi x \cdot \bar{p}_{N}
    \right)\right|  dx  \,\,  >  \,\,  \frac{\left| E_{N}
    \right|}{3}.
\end{equation} 
\item \label{gN-conditions}The functions $\chi_N := \chi_{E_N}$ and $g_N$ in \eqref{def-fN} obey 
\begin{align} &0 \leq  g_{N} \leq 1, \quad
 \text{supp}(\widehat{g}_{N})\subset \left[-\ell_{N} , \ell_{N}\right]^{n},
 \text{ and } \nonumber \\
\label{smallerrors} &\big\| g_{N} - \chi_{{N}} \big\|_1 + \big\| g_{N} - \chi_{{N}} \big\|_{2p_0}  \leq  {2^{-Cm}} .
\end{align}
\vskip0.1in
\item \label{nonempty-condition} The function $x \mapsto \cos(2 \pi x \cdot \bar{p}_N)$ changes sign in $E_N$. More precisely, the sets $\{x \in E_N : \cos(2 \pi x \cdot \bar{p}_N) > 0  \}$ and  $\{x \in E_N : \cos(2 \pi x \cdot \bar{p}_N) < 0  \}$ both have positive Lebesgue measure. 
\end{enumerate}
\end{prop}
\vskip0.2in
\noindent {\em{Remark:}} Before embarking on the proof, let us rephrase the
geometric condition \eqref{cubespread} in an analytical form that is
more convenient to check. Since 
\[ \sum_{j \in J_i} Q_j = Q(L_i, \bar{P}_i), \quad \text{with} \quad
L_i = \sum_{j \in J_i} \ell_j, \; \bar{P}_i = \sum_{j \in J_i}
\bar{p}_j, \]
the condition \eqref{cubespread} is equivalent to $\bar{P}_1 -
\bar{P}_2 \not\in Q(L_1 + L_2, 0)$. If we set $j_0 = ||J||_{\infty}$, $\mu = \#\{r : j_{r} = j_0\}$ and  $\nu = \# \{r : j'_{r} = j_0\}$, this in turn can be written as  
\begin{equation}\label{cubespread2}
 (\mu - \nu)\bar{p}_{j_0}  + \sum_{j_{r} < j_{0}} \bar{p}_{j_{r}} -
\sum_{j'_{r} < j_{0}}\bar{p}_{j'_{r}}   \notin    Q \big( \sum_{j \in
  J} \ell_{j} , 0 \big).
\end{equation} 
If $\mu \ne \nu$, this condition specifies a set of possible
$\bar{p}_{j_0}$ that ensures the disjointness condition \eqref{cubespread}. We will use this to define $\bar{p}_{N}$ in the sequel. 

\vskip0.2in

\begin{proof}[Proof of Proposition \ref{Lemma 3}] 
The proof proceeds by induction on $N$. The sequence $\{\phi_{\ell} :
\ell \geq 1\}$ is an approximation to the identity; hence setting $E_{1} = E^{(0)}_{1} =
Q$, we can choose $\ell_{1} > 0$ large enough so that
(\ref{smallerrors}) holds with $N=1$. Clearly $0\leq g_1\leq 1$.
Further $\widehat{g}_{1} = 
\widehat{\phi}_{\ell_{1}} \widehat{ \chi}_{E_{1}}$, so we also have 
$\supp(\widehat{g}_{1})\subset \supp \widehat{\phi_{\ell_{1}}}
\subset \left[-\ell_{1} , \ell_{1}\right]^{n}$. This verifies the
requirements of part (\ref{gN-conditions}). The condition \eqref{cubespread} (or equivalently
\eqref{cubespread2}), as required by part
(\ref{intersection}), is vacuous in this case, since the only cube
available so far is $Q_1$, and hence $\mu = \nu$ for any choice of
multi-index $J$. For \eqref{pN-int}, we observe that 
for any choice of nonzero $\bar{p}_{1}\in \mathbb{Z}^{n}$ we have
\begin{align*}
\int_{E_1} \left|\cos \left( 2\pi x \cdot \bar{p}_{1} \right)\right|
\, dx &=\int_{Q}  \left|\cos \left( 2 \pi x \cdot \bar{p}_{1} \right)\right| \, dx   
\geq  \int_{Q}  \cos^{2} \left( 2 \pi x \cdot \bar{p}_{1} \right)  dx
\\
&= \int_{Q}  \frac{1 + \cos \left( 4 \pi x \cdot \bar{p}_{1} \right)}{2} dx   
=  \frac{|Q|}{2}=\frac{\left|E_{1}\right|}{2} >  \frac{\left|E_{1}\right|}{3}. 
\end{align*}
Thus any nontrivial choice of $\bar{p}_1$ would ensure
\eqref{ENpN-integral}. Since $E_1 = Q$, condition \eqref{nonempty-condition}  is also trivially satisfied. With $\ell_1$ already chosen as above and keeping in mind that $X_1$ is a sector with unbounded
interior, we can now select $\bar{p}_{1}$ so that \eqref{in-sector} holds. This
completes the verification of the base case $N = 1$.
\vskip0.1in
\noindent For the inductive step, assume that we have constructed $\bar{p}_i,l_i,E_i, g_i$ obeying all conclusions of the lemma for $i=1, 2, \ldots , N-1$. Define $E_N$ via (\ref{def-esets}). Note that this is possible since
$E_{N^{\ast}}$ and $\bar{p}_{N^{\ast}}$ have already been set. Further, $E_N$ thus defined is nonempty, measurable and of positive measure since condition \eqref{nonempty-condition} holds for $N^{\ast}$. Hence we can choose $\ell_N>0$ large enough so
that (\ref{smallerrors}) holds. The properties $0 \leq  g_{N} \leq 1$
and supp$(\widehat{g}_{N})\subset \left[- \ell_{N} , \ell_{N}\right]^{n}$ follow as
in the case $N=1$, establishing part \eqref{gN-conditions}. 
For \eqref{pN-int}, we argue as follows: given any $\bar{p} \in \mathbb{Z}^{n}$ we have
\begin{align*}
 \int_{E_{N}}  \left|\cos \left( 2\pi x \cdot \bar{p} \right)\right|  dx  
&\geq  \int_{E_{N}}  \left|\cos \left( 2\pi x \cdot \bar{p}
  \right)\right|^2  dx   \geq \int_{E_{N}}  \frac{1 + \cos \left( 4 \pi x \cdot \bar{p} \right)}{2} dx  \\
&= \frac{\left|E_{N}\right|}{2}  + \frac{1}{2}\int_{E_{N}}  \cos
\left( 4 \pi x \cdot \bar{p} \right) dx = \frac{\left|E_{N}\right|}{2}
+ \frac{1}{2}\hbox{Re\,}\widehat{\chi}_{N}(2\bar{p}).
\end{align*}
By the Riemann-Lebesgue lemma, $\widehat{\chi}_{N}(\xi) \rightarrow 0$ as $\left|\xi\right| \rightarrow \infty$. 
Thus for any choice of $\xi = \bar{p}_N$ with $|\bar{p}_N|$ large
enough, we can ensure $|\widehat{\chi}_{N}(\bar{p}_N)| < |E_N|/3$, resulting in
\eqref{ENpN-integral}. 
\vskip0.1in
\noindent For part \eqref{intersection}, we must choose $\bar{p}_{N}$ so that 
(\ref{cubespread2}) holds for all $2h$-dimensional multi-indices $J = (j_1, \cdots, j_h; j_1', \cdots, j_h')$
with $||J||_{\infty} \leq N$ and $\mu \ne \nu$. If $||J||_{\infty} \leq N-1$, this is a consequence of the induction hypothesis. 
We may therefore assume $||J||_{\infty} = N$. This means that $j_0 = N$ in the notation of \eqref{cubespread2}. In order to ensure \eqref{cubespread2}, we must have for $s=\pm1, \dots,\pm p_0$,
$$
s \bar{p}_{N} \notin 
- \sum_{j_{r} < N} \bar{p}_{j_{r}} + \sum_{j'_{r} <
  N}\bar{p}_{j'_{r}}  + Q \big( \sum_{j \in J} \ell_j, 0 \big).
$$
Since $\bar{p}_1, \cdots, \bar{p}_{N-1}$ and $\ell_1, \cdots, \ell_N$
have been determined by the previous steps of the construction, the
right hand side of the relation above gives us a finite number of
known cubes that $s \bar{p}_{N} $ must avoid for  $s=\pm1, \dots,\pm p_0$. 
This can be guaranteed if we assume that $|\bar{p}_{N}|$ is large
enough. 
\vskip0.1in
\noindent To establish \eqref{nonempty-condition}, we observe that the periodic function $x \mapsto \cos(2 \pi x \cdot \bar{p}_N)$ alternately assumes positive and negative values on parallel strips separated by distance $\sim |p_N|^{-1}$ and of comparable thickness. Thus given any open ball in $Q$, one can always choose $\bar{p}_N$ large enough so that $\cos(2 \pi x \cdot \bar{p}_N)$ changes sign on the ball. Since $E_N$ is by definition open relative to $Q$, condition \eqref{nonempty-condition} follows. 
\vskip0.1in
\noindent Note that the possible choices of $\bar{p}_N$ so far only
require the vector to be large in magnitude, with no restriction in
direction. Now we choose a specific direction, and place $\bar{p}_{N}$
so that we additionally have $Q_{N} \subset X_{N}$, establishing \eqref{in-sector}. This completes the
inductive step and hence the proof of the proposition.
\end{proof}

\subsection{Finer properties of $f_N$} The algorithm described in
Section \ref{construction-section} endows the resulting sets $E_N = E_j^{(k)}$ and
functions $f_N = f_j^{(k)}$ with properties beyond those given in Proposition
\ref{Lemma 3}. A few of these finer properties are essential to the proof of Proposition \ref{building
  blocks proposition}. We record them here.    
   
\begin{lemma} \label{Lemma 4}
Let $E_{N}$ be the sets defined in Proposition \ref{Lemma 3}. Then the
following conclusions hold. 
\begin{enumerate}[(a)]
\item \label{EN-tree-system} The functions $\chi_N= \chi_j^{(k)}$ form
a tree system, as defined in Definition \ref{tree-def}(\ref{tree-system-def})). 
In particular, they obey the conclusions of Lemma
\ref{lemma-nesting}(a)-(c). Moreover, the following functional identities hold Lebesgue almost surely on $Q$. 
\begin{align}\label{sum-1step}
\chi^{(k+1)}_{2j-1} +\chi^{(k+1)}_{2j}  &=\chi ^{(k)}_{j}, \text{ and }\\ 
\label{sum-m-step}
\sum^{2^{m} - 1}_{N = 1}   \chi_N &\equiv m 
\end{align}
\vskip0.1in
\item \label{sum-over-ht} Let $N_{0} \in \left\{1, \ldots 2^{m}-1\right\}$. Using the
  terminology of trees introduced in Section \ref{tree-section}, let us denote by $\mathcal{T}^{(N_{0})}$ the subtree of $\mathcal{T}_{m}$ having $N_{0}$ as root. Then for every fixed integer $r$ with  $h(N_{0}) \leq r \leq m$, we have:
\begin{equation}\label{sum-subtrees-step}
 \sum_{\substack{N \in \mathcal{T}^{(N_{0})} \\ h(N) = r}}  \chi_{N} = \chi_{N_{0}}.
\end{equation}
\vskip0.1in 
\item \label{fN-tilde} The family of functions $\{\tilde{f}_{N} :=
  \cos(2 \pi \bar{p}_N \cdot) \chi_N : N = 1,
  \cdots, 2^m-1\}$ is a signed tree system in the sense of Definition \ref{tree-def}(\ref{signed-tree-system-def}).
\end{enumerate} 
\end{lemma}


\begin{proof} Rewriting \eqref{def-esets} in terms of $j,k$, we get that $E_1=Q$ and
\begin{align*}
E^{(k+1)}_{2j-1} &:= \big\{ x \in E^{(k)}_{j} :  \cos ( x \cdot \bar{p}^{(k)}_{j} ) > 0 \big\}, \\
E^{(k+1)}_{2j} &:= \big\{ x \in E^{(k)}_{j} :  \cos ( x \cdot \bar{p}^{(k)}_{j} ) < 0 \big\}.	
\end{align*}
Therefore, the sets $E^{(k+1)}_{2j-1}$ and $E^{(k+1)}_{2j}$ are disjoint and contained in $E^{(k)}_{j}$,
so that the functions $\chi^{(k)}_j$ form a tree system. 
Since the set $\big\{ x \in Q: \cos ( x \cdot \bar{p}^{(k)}_{j}) = 0
\big\}$ has Legesgue measure 0, we also have
(\ref{sum-1step}). Iterating (\ref{sum-1step}), we get that the
following holds almost surely, 
\begin{equation} \label{chiQ}
\begin{aligned}
\chi_Q =\chi^{(0)}_1 & = \chi^{(1)}_1+ \chi^{(1)}_2
\\
&=  (\chi^{(2)}_1+ \chi^{(2)}_2 ) + (\chi^{(2)}_3+ \chi^{(2)}_4)
\\
& = \dots = \sum_{1\leq j\leq 2^k} \chi^{(k)}_j
\end{aligned}
\end{equation} 
for every $k=0,1,\dots, m-1$. Summing over $k$ yields (\ref{sum-m-step}).
\vskip0.1in
\noindent We now turn to (\ref{sum-subtrees-step}).
If $h(N_{0}) = r$, the summation is over the single vertex $N_0$ and there is
nothing to prove. If $h(N_{0}) < r \leq m$, then \eqref{sum-subtrees-step} follows from the same calculations as in \eqref{chiQ}, except we start from $\chi_{N_0}$ instead of $\chi_Q$. 

\vskip0.1in
\noindent Regarding (\ref{fN-tilde}), let us observe that
$\tilde{f}_{N}(x) = \cos(2 \pi \bar{p}_{N} \cdot x) \chi_N$ and $\cos\left(2\pi\bar{p}_{N} \cdot x\right)$ have
the same sign in the set $E_{N}$. In view of \eqref{def-esets}, this
shows that we have, up to sets of Lebesgue measure zero 
\begin{align*}
\supp \tilde{f}_N   &=   E_{N}   =  \Big\{ x \in E_{N^{\ast}} : (-1)^{j-1}  \cos \left( 2\pi x \cdot \bar{p}_{N^{\ast}} \right) > 0 \Big\}
\\
&=  \Big\{ x \in E_{N^{\ast}} : (-1)^{j-1}  \tilde{f}_{N^{\ast}}(x) > 0 \Big\}  \,\,  =  \,\,  \Big\{ x \in Q : (-1)^{j-1}  \tilde{f}_{N^{\ast}}(x) > 0 \Big\}.
\end{align*}
This is exactly the signed tree system condition (\ref{tree system condition 3}).
\end{proof}

\vskip0.2in
\noindent The confluence of spatial and frequency localization built
into the definition of $f_N$ results in a high degree of orthogonality
amongst them. This interaction is manifested in the $L^p$-norms of
their sums, for large exponents $p$. The following proposition, which offers an estimate of
this norm, is a critical component in the proof of Proposition
\ref{building blocks proposition}(\ref{f-pnorm}). The proof of the proposition is
nontrivial and is relegated to Section \ref{norm-estimate-section}.  
\begin{prop}\label{2p0-norm-lemma} 
For $p_0$ and $\{X_N\}$ as in Proposition \ref{building blocks proposition}, let $\{f_N: N = 1, \ldots, 2^m-1\}$ be the family of
functions given by \eqref{def-fN} in Section \ref{construction-section}. Then
there exists a constant $C_0 > 0$ depending only on $p_0$ and $n$ such
that \begin{equation}  \bigl|\big|
\sum_{N=1}^{2^m-1} f_N||_{2p_0} \leq C_0 \sqrt{m}. \label{2p0-norm}\end{equation}      
\end{prop} 
\noindent Assuming this, the proof of Proposition \ref{building blocks proposition} is completed in
the next subsection. 

\subsection{Proof of Proposition \ref{building blocks
    proposition}}\begin{proof} Since supp$(\widehat{f}_N) \subseteq Q_N$,
  part \eqref{support-cone} of the proposition follows from Proposition \ref{Lemma
    3}(\ref{in-sector}). 
\vskip0.1in
\noindent Let us turn to \eqref{f-pnorm}. Given that $p \in [1, 2p_0]$, the desired conclusion
follows from the log-convexity of Lebesgue norms,  provided we
have the correct estimates at the endpoints $p = 2p_{0}$ and $p = 1$. Proposition
\ref{2p0-norm-lemma} asserts the necessary bound for $p = 2p_0$. Our
claim is that the bound for $p = 1$ follows from the same proposition. The following chain of inequalities establishes this claim: 
\begin{align*}
\big\| \sum^{2^{m} - 1}_{N = 1}  f_{N} \big\|_1 &\leq   \big\|
\sum^{2^{m} - 1}_{N = 1}  e(\bar{p}_{N} \cdot ) \chi_{N}
\big\|_1 +   \big\| \sum^{2^{m} - 1}_{N = 1} e(\bar{p}_{N} \cdot )  \big(  g_N - \chi_{N}  \big)\big\|_1
\\
&\leq \big\| \sum^{2^{m} - 1}_{N = 1}  e(\bar{p}_{N} \cdot )
\chi_{N} \big\|_1 + \sum^{2^{m} - 1}_{N = 1} || g_N  - \chi_{N} ||_1 \\ 
& \leq \big\| \sum^{2^{m} - 1}_{N = 1}  e(\bar{p}_{N} \cdot )
\chi_{N} \big\|_1 + 2^{m-Cm} \\ 
& \leq \big\| \sum^{2^{m} - 1}_{N = 1}  e(\bar{p}_{N} \cdot )
\chi_{N} \big\|_{2p_0} + 2^{m-Cm} \\ 
&\leq \big\| \sum^{2^{m} - 1}_{N = 1} f_{N} \big\|_{2p_0} + \big\| \sum^{2^{m} - 1}_{N = 1}  e(\bar{p}_{N} \cdot )
(\chi_{N} - g_N)\big\|_{2p_0} + 2^{m-Cm} \\
& \leq \big\| \sum^{2^{m} - 1}_{N = 1}  f_{N} \big\|_{2p_0} +
2\cdot 2^{m-Cm} \\
& \leq C_0 \sqrt{m} + 1, \text{ which is } \leq C_1 \sqrt{m} \text{ if
  } C_1 > 2C_0. 
\end{align*}
The third and the sixth inequality in the sequence above uses the
error bound \eqref{smallerrors} proved in Proposition \ref{Lemma 3}(\ref{gN-conditions}). The fourth inequality follows from the fact that
$G = \sum_N e(p_N \cdot) \chi_N$ is supported on $Q$, and hence
$||G||_1 \leq ||G||_{2p_0}$ by H\"older's inequality. The last
inequality follows from the main estimate \eqref{2p0-norm} in
Proposition \ref{2p0-norm-lemma}.  The triangle
inequality is used throughout. This completes the proof of \eqref{f-pnorm}.   
\vskip0.1in
 \noindent It remains to prove (\ref{f-sup-norm}). Recall from Lemma
\ref{Lemma 4}(\ref{fN-tilde}) that $\tilde{f}_N := \cos(2 \pi
\bar{p}_N \cdot) \chi_N$ is a signed tree system. Hence by Lemma
\ref{Karagulyan's tree systems lemma} for signed tree systems, we have
\begin{align*} 
\max_{1 \leq \, l < 2^{m}}  \big| \sum^{l}_{N = 1} f_{\sigma(N)}(x) \big|  \,\,  &\geq  \,\,  \max_{1 \leq \, l < 2^{m}}  \big| \sum^{l}_{N = 1} \text{Re\,}f_{\sigma(N)}(x) \big|
\\
&\geq  \,\,  \max_{1 \leq \, l < 2^{m}}  \bigl| \sum^{l}_{N = 1} \tilde{f}_{\sigma(N)}(x) \bigr|  -  \mathcal{E}(x)  \\  &\geq  \,\   
\frac{1}{3}  \sum^{2^{m} - 1}_{N = 1} \big|\tilde{f}_{N}(x)\big|  \,\,   -  \mathcal{E}(x),  
\end{align*}
for all $x \in \mathbb{R}^{n}$, where
$\mathcal{E}(x)   \,\,  =  \,\,  \sum^{2^{m} - 1}_{N = 1}  \bigl|\tilde{f}_{N}(x) -  \text{Re\,}f_{N}(x) \bigr|$. 
The last inequality above follows from \eqref{tree system inequality},
the rest from the triangle inequality. We will show that 
\begin{align} 
\Big|\Bigl\{ x : \sum^{2^{m} - 1}_{N = 1}  \big|\tilde{f}_{N}(x)\big| \geq \frac{m}{10} \Bigr\} \Big| &\geq \frac{1}{10}, \quad {\text{ and that}} \label{ftilde-large} \\
\big| \bigl\{x : |\mathcal{E}(x)| > 1 \bigr\} \big| &\leq 
  \frac{1}{100} \text{ if $C$ in Proposition \ref{Lemma 3} is sufficiently large.} \label{E-large} 
\end{align} 
For large $m$, this would ensure \eqref{large on a large
  set} and complete the proof, with constants
$C_2 = 1/20$ and $C_3 = 9/100$, for instance.  
\vskip0.1in
\noindent To prove \eqref{ftilde-large}, let us set $\tilde{f}(x) :=
\sum^{2^{m} - 1}_{N = 1}  \big|\tilde{f}_{N}(x)  \big|$. On one
hand, by Proposition \ref{Lemma 3}(\ref{pN-int}), 
\begin{eqnarray*}
\int_{Q}  \big|\tilde{f}_{N}(x)  \big| dx  \,\,  =  \,\,    \int_{E_{N}}  \left|\cos \left(2\pi x \cdot \bar{p}_{N} \right)\right| dx  \,\,  >  \,\,  \frac{\left|E_{N}\right|}{3}.
\end{eqnarray*}
Summing over all $N$ and using (\ref{sum-m-step}) in Lemma \ref{Lemma
  4}, we obtain 
\begin{equation} \label{1}
\int_{ Q}  \tilde{f}(x) \,dx =\int_{ Q}  \sum^{2^{m} - 1}_{N = 1} 
\big|\tilde{f}_{N}(x) \big| dx  \,\,  \geq  \,\,  \frac{1}{3} 
\sum^{2^{m} - 1}_{N = 1}  \left|E_{N}\right|   =  \frac{1}{3}
\int_{Q} \sum_{N=1}^{2^m-1} \chi_N(x) \, dx =  \frac{m}{3}.
\end{equation}
On the other hand, 
\begin{equation} \label{2} 
\tilde{f}(x)  \,\,  \leq  \,\, \sum^{2^{m} - 1}_{N = 1} 
\chi_{N}(x)  \,\,  =  \,\,  m \text{ for a.e. $x \in
  Q$.} \end{equation} 
Set $E := \{x \in Q : \tilde{f}(x) \geq \frac{m}{10} \}$. Combining \eqref{1} and \eqref{2}, we see that  
\[
m |E| + \frac{m}{10}
\geq \int_{E} 
\tilde{f}(x)dx  + \int_{Q \setminus E}  \tilde{f}(x)dx =  \int_{ Q}  \tilde{f}(x)dx  \,\,  \geq  \,\, \frac{m}{3},
\]
This shows that $|E| > \frac{1}{3} - \frac{1}{10} > \frac{1}{10}$, establishing \eqref{ftilde-large}. 
 \vskip0.1in
\noindent Regarding \eqref{E-large}, we make use of \eqref{smallerrors} to deduce that 
\begin{align*}
\left\| \mathcal{E} \right\|_1 =   \big\|\sum^{2^{m} - 1}_{N= 1}  \bigl| \tilde{f}_{N} -  \text{Re\,}f_{N} \bigr| \big\|_1  
&\leq  \big\|\sum^{2^{m} - 1}_{N= 1}  \bigl| \cos(2 \pi \bar{p}_N \cdot) (\chi_N - g_N )    \bigr| \big\|_1 \\ &\leq \sum_{N=1}^{2^m-1} ||\chi_N - g_N||_1 \leq 2^{m - Cm}. 
\end{align*}
Therefore, by Chebyshev's inequality
\begin{equation}\label{3}
\bigl| \big\{ x \in Q :  \mathcal{E}(x) > 1 \big\} \bigr|  \,\,  \leq  \,\, \bigl\| \mathcal{E} \bigr\|_1  \leq 2^{m-Cm}, 
\end{equation}
which proves our claim \eqref{E-large} for $C > 0$ sufficiently large. 
\end{proof}

\section{Norm estimate: Proof of Proposition \ref{2p0-norm-lemma}} \label{norm-estimate-section} 
\noindent This section is given over to the estimation of the $L^{2p_0}$ norm of
the function $f := \sum_N f_N$, with the summands $f_N$ defined as in
Proposition \ref{Lemma 3}. Parts of the argument are highly
combinatorial, involving summations over index sets whose members are long
sequences of integers. Two previously introduced tools will continue
to be useful for book-keeping purposes; namely, the double-indexing notation relating $N$ with
the pair $(k,j)$ as in \eqref{e-jk}, and the language of trees as described in Section
\ref{tree-section}. We begin by setting up some supplementary notation that will be
convenient for handling sums over large index sets later on.      
\vskip0.1in
\subsection{Notation} 
\subsubsection{Small errors} \label{errors-section} For any two quantities $X$ and $Y$
depending on $m$, we will write $X  \cong  Y$ if $|X - Y| \leq A 2^{-Bm}$, where the multiplicative constant $A$ and the
exponent $B$ may depend on $p_0$ and $n$, and may change from line to line but remain independent of
$m$. Both $A$ and $B$ will always be sufficiently large. In our
applications, $B$ will depend on the large constant $C$ from
Proposition \ref{Lemma 3}. Assuming that $C \gg p_{0}$ was chosen
large enough, we will always be able to ensure that $B > \frac{C}{10}$.
\vskip0.1in
\noindent The notation $X = O(Y)$ will be used to mean $|X| \leq A|Y|$, with the same conditions on the constant $A$  as above. 


\subsubsection{Grouping of vectors of vertices} 
\noindent 
Our main estimate will be proved by expanding the $L^{2p_0}$ norm of $f$ as a sum of integrals
of the form
\begin{equation}\label{eff-eq}
\int f_{m_1}^{\mu_1} \cdots f_{m_r}^{\mu_r}
\overline{ f_{n_1}^{\nu_1} \cdots f_{n_s}^{\nu_s}} dx
\end{equation}
with $\mu_1 + \cdots + \mu_r =\nu_1 + \cdots + \nu_s = p_0$, then grouping these integrals appropriately to obtain 
cancellations and simplifications. The notation introduced in this subsection will facilitate that process.

\vskip0.1in
\noindent Given an integer exponent $1 \leq p \leq p_0$ and an integer dimension $1 \leq r \leq p$, we define a
{\em{multiplicity vector for the exponent $p$ of length $r$}} to be of the form \[\bar{\mu} =
(\mu_1, \cdots, \mu_r) \in \mathbb N^r, \quad \text{ where } \quad \mu_1 + \mu_2 + \cdots
+ \mu_r = p. \]  
The use of a multiplicity vector allows us to rewrite a $p$-long
integer vector with some possibly coincident entries in ``collapsed
form''. For instance, all such sequences with $r$ distinct entries,
where the $i$-th smallest element occurs with frequency $\mu_i$, can
be gathered into a single collection, as explained 
below. \vskip0.1in
\noindent Given integers $1 \leq h \leq m$, $1 \leq r \leq p \leq p_0$ we set 
\begin{equation}
\mathbb A_{p,h}[r] := \Bigl\{ (\bar{m}, \bar{\mu}) \Bigl| \;
\begin{aligned} 
&\bar{m} = (m_1, \cdots, m_r) \in \mathbb N^r, \;1 \leq m_1 < \cdots < m_r < 2^h \\ 
& \bar{\mu} \text{ is a multiplicity vector for $p$ of length $r$} 
\end{aligned}  
\Bigr\}.  \label{def-grandA} 
\end{equation}
Observe that for every $(\bar{m}, \bar{\mu}) \in \mathbb A_{p,h}[r]$, there exist $p$-dimensional vectors $ \mathbf N \in \mathbb N^p$ such that $m_i$ occurs in the string $\mathbf N$ exactly $\mu_i$ times for each $1 \leq i \leq r$. For example, we can take  $\mathbf N$ to be $\mathbf N[\bar{m}, \bar{\mu}]$, which is by definition a $p$-long vector  
 whose first $\mu_1$ entries are $m_1$, the next $\mu_2$ entries are $m_2$, and so on. The relevance of $\mathbb A_{p, h}[r]$ lies in the following partition of the index set: 
\begin{equation} \label{partition} \{1, \cdots, 2^h-1 \}^p = \bigsqcup_{r=1}^p \{\mathbf N : \exists (\bar{m}, \bar{\mu}) \in \mathbb A_{p,h}[r] \text{ so that $\mathbf N$ is a permutation of $\mathbf N[\bar{m}, \bar{\mu}]$}  \}.  \end{equation}  
\vskip0.1in
\noindent Right now, an element of $\mathbb A_{p,h}[r]$ is a 2-tuple $(\bar{m}, \bar{\mu})$, whose first component $\bar{m}$ is a multi-index and whose second component $\bar{\mu}$ is an $r$-long multiplicity vector for $p$. The number of choices of $\bar{\mu}$ for a fixed $p$ and $r$ is bounded by a constant depending only on $p$ and independent of $\#U$ (and hence $h$), whereas $\bar{m}$ ranges over an index set of cardinality $O(2^{hr})$, which is typically much larger. For the quantitative bounds that we seek, it is therefore no loss of generality to work with a fixed multiplicity vector $\bar{\mu}$ at a time. In order to keep track of the collection of all multi-indices $\bar{m}$ that generate elements of $\mathbb A_{p,h}[r]$ for a fixed multiplicity, we define 
\begin{equation} 
\mathbb A_h[r, \bar{\mu}] := \left\{ \bar{m} : (\bar{m}, \bar{\mu}) \in \mathbb A_{p, h}[r] \right\}, \label{def-Ah} 
\end{equation} 
which is in effect the $\bar{\mu}$-fibre of $\mathbb A_{p,h}[r]$.  
\vskip0.1in
\noindent We will also need to stratify pairs of vectors according to the
position of their combined maximal element in the binary tree. With that in mind and given multiplicity vectors $\bar{\mu}, \bar{\nu}$ of length $r, s$ for the exponents $p,q$ respectively, we set  
\begin{align} 
\mathbb M_h[r, \bar{\mu}; s, \bar{\nu}] &:= \left\{\bar{\alpha} =
(\bar{m}, \bar{n}) \Bigl|\; \begin{aligned} &\bar{m} \in \mathbb A_h[r, \bar{\mu}], \; \bar{n}
\in \mathbb A_h[s, \bar{\nu}], \\ &h(||\bar{\alpha}||_{\infty}) =
h(\max(m_r, n_s)) = h \end{aligned} \right\}. \label{Mh-def}
\end{align} 
Recall that $h(N) = k$ denotes the height of the vertex $N = 2^k+j-1$ in the binary tree $\mathcal T_m$. 
The parameters $r, s, \bar{\mu}, \bar{\nu}$
occurring in the argument of \eqref{Mh-def} will be suppressed if they are clear from the context.

\vskip0.1in
\noindent
Two special subclasses of
$\mathbb M_{h}$ will be important for our analysis. They are: 
\begin{align}
\mathbb M_h' &:= 
\left\{\bar{\alpha} \in \mathbb M_h : \exists \text{ a ray $\mathscr
    R$ in $\mathcal T_h$ such that all entries of $\bar{\alpha}$ lie
    on $\mathscr R$}   
\right\} \\
\mathbb M_h^{\ast} &:= \begin{cases} 
\{\bar{\alpha} = (\bar{m}, \bar{n})\in \mathbb M_h': m_r = n_s \}
&\text{ if } \mu_r = \nu_s, \\ \emptyset &\text{ otherwise}. 
\end{cases} 
\end{align} 
Figures \ref{fig5} and \ref{fig5-star} depict examples of multi-indices $\bar{\alpha}$ that lie in these special subclasses.  

\begin{figure}[htbp]\centering\includegraphics[width=4.in]{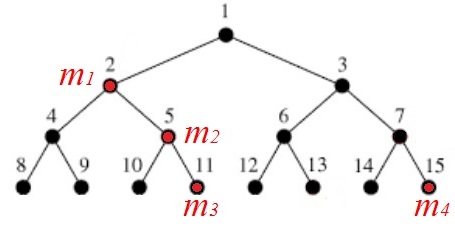}
\caption{Fix $\bar\mu, \bar\nu$ with $r=s=3$.
Let $\bar{m} = (m_1, m_2, m_3)$, $\bar{m}' = (m_1, m_2, m_4)$. Then $\bar{\alpha} = (\bar{m}, \bar{m}) \in \mathbb M_h'$ but $\bar{\beta} = (\bar{m}, \bar{m}') \notin \mathbb M_h'$. 
}
\label{fig5}
\end{figure}

\begin{figure}[htbp]\centering\includegraphics[width=3.in]{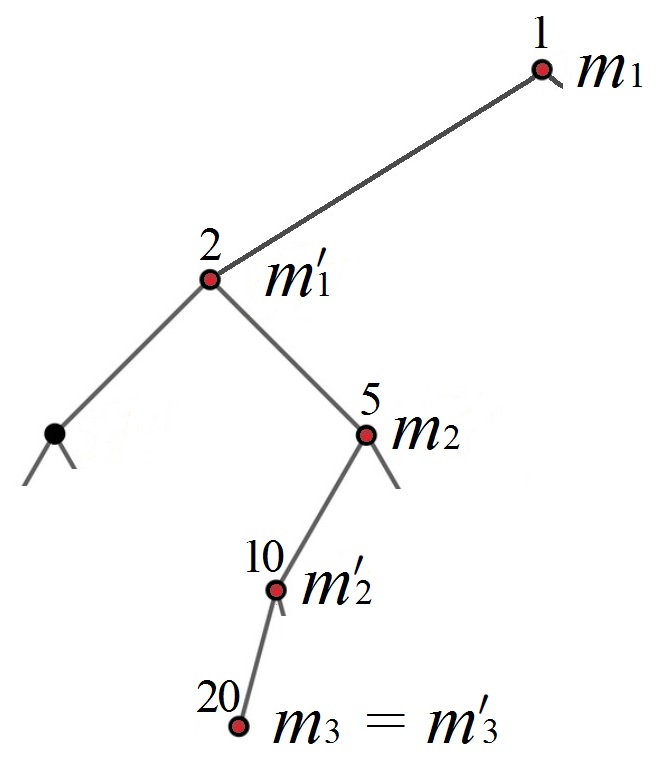}
\caption{Fix multiplicity vectors $\bar\mu, \bar\nu$ with $r=s=3$ and $\mu_3=\nu_3$.
Let $\bar{m} = (m_1, m_2, m_3)$, $\bar{n} = (m_1', m_2', m_3')$ so that $m_3=m'_3$ and all $m_i,m'_i$ lie on the same ray of
the tree. Then $\bar{\alpha} = (\bar{m}, \bar{n}) \in \mathbb M_h^{\ast}$ for these multiplicity vectors. 
}
\label{fig5-star}
\end{figure}

\subsection{Main steps} The relevance of the aforementioned notation in the
context of the norm estimation problem is clarified in the following sequence of
lemmas, which provides the key ingredients. 
\begin{lemma} \label{sum-prep-lemma}
Let $f := \sum_N f_N$, with $f_N$ given by \eqref{def-fN} in Section \ref{building-block-proof-section}. Then  
\begin{equation}  \label{sum with mult} 
||f||_{2p_0}^{2p_0} = \sum' C(\bar{\mu}, \bar{\nu}) \sum_{h=1}^{m} \mathtt F_h[r,\bar{\mu}; s, \bar{\nu}], \quad \text{
  where } \quad \mathtt F_h :=
\sum_{\bar{\alpha} \in \mathbb M_h} \int F_{\bar{\alpha}}(x) \, dx.  
\end{equation} 
To explain the notation in the above line, 
\begin{itemize} 
\item The outer sum $\sum'$ ranges over all choices of positive
integers $1 \leq r, s \leq p_0$ and all choices of
multiplicity vectors $\bar{\mu}, \bar{\nu}$ for the exponent $p_0$ of lengths $r$ and $s$
respectively.
\vskip0.1in  
\item The constants $C(\bar{\mu}, \bar{\nu})$ depend only on
$\bar{\mu}, \bar{\nu}, r, s, p_0$ and are independent of $m$; specifically
\[ C(\bar{\mu}, \bar{\nu}) = {{p_0}\choose{\bar{\mu}}} {{p_0}\choose{\bar{\nu}}}  = \frac{(p_0!)^2}{\mu_1! \cdots \mu_r!
  \nu_1! \cdots \nu_s!}.  \] 
 \vskip0.1in
\item Given $\bar{\alpha} = (\bar{m}, \bar{n}) \in \mathbb
M_h[r, \bar{\mu}; s, \bar{\nu}]$, 
\begin{equation}  F_{\bar{\alpha}} := f_{m_1}^{\mu_1} \cdots f_{m_r}^{\mu_r}
\overline{ f_{n_1}^{\nu_1} \cdots f_{n_s}^{\nu_s}}.  \label{def-Falpha} \end{equation}    
\end{itemize} 
\end{lemma}  
\noindent We will continue to use the notation \eqref{def-Falpha} even if $\bar{\mu}, \bar{\nu}$ are multiplicity vectors for different exponents.   
\vskip0.2in 
\begin{lemma} \label{Lemma 5}
Fix $1 \leq h \leq m$, and two multiplicity vectors $\bar{\mu}, \bar{\nu}$ for integer exponents $1 \leq p, q \leq p_0$ of lengths $r, s$ respectively. Then the following conclusions hold: 
\vskip0.1in 
\begin{enumerate}[(a)]
\item \label{FsameG}For any $\bar{\alpha} = (\bar{m}, \bar{n}) \in
\mathbb M_h[r, \bar{\mu}; s, \bar{\nu}]$,   
\begin{align} \int F_{\bar{\alpha}} dx &\cong \int G_{\bar{\alpha}} dx,
  \text{ where } \label{FsameG-estimate} \\ 
G_{\bar{\alpha}}  (x)   &:=   e( v_{\bar\alpha} \cdot x)
\chi_{E_{\bar{\alpha}}} (x), \text{ with }  v_{\bar\alpha} :=\sum_{i=1}^r\mu_{i} \bar{p}_{m_{i}} - \sum_{i=1}^s \nu_{i} \bar{p}_{n_{i}},  \text{ and } \\
E_{\bar{\alpha}} &:= E_{m_{1}} \cap \ldots E_{m_{r}} \cap E_{n_{1}} \cap \ldots \cap  E_{n_{s}}. \label{E-alpha-def}
\end{align} 
Here the notation $\cong$ denotes equality up to small errors, as explained in Section \ref{errors-section}.  The sets $E_N$ on the right hand side of \eqref{E-alpha-def} are as in \eqref{def-esets}. 
\vskip0.1in
\item \label{E-nonempty} If $E_{\bar{\alpha}} \neq \emptyset$, then $\bar{\alpha} \in
  \mathbb{M}'_{h}$. The converse is also true. For such $\bar{\alpha}$, we have \begin{equation} \label{set-vector-number} E_{\bar{\alpha}} = E_{||\bar{\alpha}||_{\infty}}. \end{equation} 
\vskip0.1in
\item \label{F-G-G'} As a consequence, $\mathtt F_h \cong \mathtt G_h = \mathtt
  G'_h$, where 
\[ \mathtt G_h[r, \bar{\mu}; s, \bar{\nu}] := \sum_{\bar{\alpha} \in \mathbb M_h} \int
G_{\bar{\alpha}}(x) \, dx \quad \text{ and } \quad \mathtt G_h'[r, \bar{\mu}; s, \bar{\nu}] := \sum_{\bar{\alpha} \in \mathbb M_h'} \int
G_{\bar{\alpha}}(x) \, dx.\]   
\end{enumerate}
\end{lemma}
\vskip0.2in 
\begin{lemma} \label{Lemma 8}
In the notation of Lemma \ref{Lemma 5}, we have
\begin{equation} \label{Gh-estimate} 
\mathtt G_h[r,\bar{\mu}; s, \bar{\nu}]  \cong \mathtt
G_h^{\ast}[r,\bar{\mu}; s, \bar{\nu}] \cong \left\{
\begin{aligned}
&0 & \text{if } \bar{\mu} \neq \bar{\nu},\\
&O  \left(h^{r-1}\right)  & \text{if }  \bar{\mu} = \bar{\nu}.
\end{aligned} \right.
\end{equation} 
Per our notational convention, we have used \[\mathtt G_h^{\ast} [r, \bar{\mu}; s, \bar{\nu}]:= \sum_{\bar{\alpha} \in \mathbb M_h^{\ast}} \int G_{\bar{\alpha}}(x) \, dx. \] 
 \end{lemma}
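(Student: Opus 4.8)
The plan is to analyze $\mathtt G_h$ and $\mathtt G_h^\ast$ by exploiting the signed-tree-system structure of the sets $E_N$. By Lemma \ref{Lemma 5}\eqref{E-nonempty}, only $\bar\alpha\in\mathbb M_h'$ contribute, so all the relevant indices $m_1,\dots,m_r,n_1,\dots,n_s$ lie on a single ray $\mathscr R$ of the tree $\mathcal T_h$. Along such a ray the sets $E_{m_1}\supset E_{m_2}\supset\cdots$ are totally nested, so $E_{\bar\alpha}=E_{M}$, where $M:=\|\bar\alpha\|_\infty=\max(m_r,n_s)$ is the ``deepest'' vertex. Thus each summand is $\int_{E_M} e(v_{\bar\alpha}\cdot x)\,dx = \widehat{\chi}_{E_M}(-v_{\bar\alpha})$, which by Riemann--Lebesgue is $\cong 0$ unless the frequency $v_{\bar\alpha}$ is itself small — and by the key spreading property \eqref{cubespread2} established in Proposition \ref{Lemma 3}\eqref{intersection}, $v_{\bar\alpha}=\sum\mu_i\bar p_{m_i}-\sum\nu_i\bar p_{n_i}$ can be forced to have large magnitude (so the integral is $\cong 0$) precisely when the multiplicity of $M$ among the $m_i$'s differs from its multiplicity among the $n_i$'s; in the notation of that proposition, when $\mu\ne\nu$ at the top vertex. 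This is exactly what makes $\mathbb M_h^\ast$ the relevant sub-collection: $\mathbb M_h^\ast$ consists of those $\bar\alpha\in\mathbb M_h'$ for which $m_r=n_s$ and $\mu_r=\nu_s$, i.e. the top-multiplicity balanced case. Hence $\mathtt G_h\cong\mathtt G_h^\ast$: every $\bar\alpha\in\mathbb M_h'\setminus\mathbb M_h^\ast$ contributes $\cong 0$, which gives the first $\cong$ in \eqref{Gh-estimate}.

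Next I would evaluate $\mathtt G_h^\ast$. For $\bar\alpha\in\mathbb M_h^\ast$ we have $m_r=n_s=:M$ and $\mu_r=\nu_s$, so the top terms cancel in $v_{\bar\alpha}$ and the frequency reduces to $v_{\bar\alpha}=\sum_{i<r}\mu_i\bar p_{m_i}-\sum_{i<s}\nu_i\bar p_{n_i}$, a frequency built only from strictly shallower vertices. Peeling off the deepest vertex $M$ and recursing: either the remaining multi-indices again have matching top multiplicities (keep going) or they do not, in which case \eqref{cubespread2} again forces a nonzero integral to be $\cong 0$. Carrying this recursion all the way down, a string $\bar\alpha\in\mathbb M_h^\ast$ contributes non-negligibly only when, at \emph{every} level, the multiplicity pattern of the $m$-side matches that of the $n$-side — in other words only when $\bar m=\bar n$ \emph{and} $\bar\mu=\bar\nu$ as a full pattern. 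In particular $\mathtt G_h^\ast\cong 0$ when $\bar\mu\ne\bar\nu$, giving the first line of \eqref{Gh-estimate}. When $\bar\mu=\bar\nu$, the surviving $\bar\alpha$ have $\bar m=\bar n$, hence $v_{\bar\alpha}=0$ exactly and $\int G_{\bar\alpha}=|E_{m_r}|$; the number of such $\bar m$ is the number of ways to choose a chain $m_1<\cdots<m_r$ on a fixed ray with $h(m_r)$ essentially $h$. The count of increasing $r$-chains on a ray of height $h$ with deepest vertex at height $\sim h$ is $O(h^{r-1})$ — one chooses the $r-1$ shallower heights among roughly $h$ available levels — and each contributes a quantity $|E_{m_r}|\le 1$, yielding the bound $O(h^{r-1})$. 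This gives the second line of \eqref{Gh-estimate}.

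A couple of points need care. First, one must be sure the error terms accumulated through the recursion stay within the $\cong$ tolerance: there are at most $O(2^{hr})$ tuples $\bar\alpha$ and each $\cong$-error is $A2^{-Bm}$ with $B>C/10$, so the total error is $O(2^{mr-Bm})$, which is again $\cong 0$ since $C\gg p_0$ can be taken large enough that $B$ dominates $r\le p_0$. Here it matters that we work with $h\le m$, so $2^{hr}\le 2^{mr}$. Second, one must justify the bookkeeping identification of ``top multiplicity'' with the parameters $\mu_r,\nu_s$: this is exactly the correspondence between the collapsed form $\mathbf N[\bar m,\bar\mu]$ and the raw string, and the fact that on a ray the maximal vertex $M$ is unique, so $\mu:=\#\{r: j_r=M\}$ in \eqref{cubespread} equals $\mu_r$ when $m_r=M$ (and is $0$ otherwise), and similarly on the $n$-side.

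The main obstacle I expect is the second paragraph: correctly organizing the level-by-level recursion so that the ``cancel top vertex, recurse on a lower ray'' argument is airtight, in particular tracking which sub-collection of $\mathbb M_h'$ survives at each stage and showing it telescopes down to exactly $\{\bar m=\bar n\}$ in the balanced case. The estimate \eqref{cubespread2} is the engine at every step, but applying it requires that the ``known shallow frequencies'' one is avoiding really are fixed before $\bar p_M$ is chosen — which is guaranteed by the inductive (in $N$) construction in Proposition \ref{Lemma 3}, but must be invoked with the multi-index $J$ whose $\|J\|_\infty$ equals the current deepest vertex. Getting the quantifiers in the right order — ``for the deepest vertex $M$, its $\bar p_M$ was chosen to avoid all cubes built from $\bar p_1,\dots,\bar p_{M-1}$ with the relevant multiplicities'' — is the delicate part.
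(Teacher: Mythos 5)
Your skeleton is the same as the paper's: restrict to $\mathbb M_h'$, kill $\mathbb M_h'\setminus\mathbb M_h^{\ast}$ using \eqref{cubespread}, then peel off the deepest vertex and count. However, two of the quantitative steps do not work as you have written them. First, the vanishing mechanism: the Riemann--Lebesgue lemma is qualitative and cannot by itself produce the error $A2^{-Bm}$ required by $\cong$. Worse, after one peeling step the remaining frequency $v'=\sum_{i<r}\mu_i\bar p_{m_i}-\sum_{i<s}\nu_i\bar p_{n_i}$ involves only vectors chosen \emph{before} $\ell_M$ in the inductive construction, while your domain of integration is still the deepest set $E_M$; thus $v'$ may lie well inside $[-\ell_M,\ell_M]^n$, and $\widehat{\chi}_{E_M}(-v')$ need not be small at all. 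So ``cancel the top vertex and recurse on the same $\bar\alpha$'' genuinely fails. The paper avoids both problems: $\int G_{\bar\alpha}\cong\int F_{\bar\alpha}$, and by Plancherel the latter integrand is supported in $\bigl(\sum_i\mu_iQ_{m_i}\bigr)\cap\bigl(\sum_i\nu_iQ_{n_i}\bigr)$, which \eqref{cubespread} makes \emph{exactly empty} off $\mathbb M_h^{\ast}$; those terms vanish identically, the only error coming from the $F\to G$ replacement.

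Second, the recursion and the count. The obstacle you flag at the end is resolved in the paper not by recursing on individual terms but by summing over the deepest vertex first: partition $\mathbb M_h^{\ast}$ according to the height $h_1$ of $\|\bar\alpha'\|_\infty$ and use Lemma \ref{Lemma 4}\eqref{sum-over-ht} to get $\sum_{m_r}\chi_{m_r}=\chi_{\|\bar\alpha'\|_\infty}$, whence $\mathtt G_h'[r,\bar\mu;s,\bar\nu]\cong\sum_{h_1=1}^{h-1}\mathtt G_{h_1}'[r-1,\bar\mu^{[1]};s-1,\bar\nu^{[1]}]$. This single identity both moves the domain of integration up to $E_{\|\bar\alpha'\|_\infty}$, so that the disjointness argument applies legitimately at the next level, and supplies the count: iterating $r$ times when $\bar\mu=\bar\nu$ gives $\sum_{h_1<h}\cdots\sum_{h_{r-1}<h_{r-2}}1=O(h^{r-1})$. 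Your direct count is incorrect: the number of admissible chains $\bar m$ with $h(m_r)=h$ is on the order of $2^h\binom{h}{r-1}$ (there are $2^h$ vertices at height $h$, each determining its own ray), so bounding each term by $|E_{m_r}|\le 1$ yields an exponentially large answer. What actually saves the estimate is the partition identity $\sum_{m_r:\,h(m_r)=h}|E_{m_r}|=1$ from Lemma \ref{Lemma 4}, not a termwise bound.
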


\subsection{Proof of Proposition \ref{2p0-norm-lemma}} 
\begin{proof} We complete the proof of the proposition assuming the three lemmas
above. In view of Lemma \ref{sum-prep-lemma}, $||f||_{2p_0}^{2p_0}$ is given by \eqref{sum with mult}. The number of summands in the outer sum on the right hand side of the equation \eqref{sum with mult} depends only on $p_0$; hence it suffices to show that 
\[ \sum_{h=1}^{m} \mathtt F_h[r, \bar{\mu}; s, \bar{\nu}] = O(m^{p_0}) \]  for every fixed choice of integers $r, s \leq p_0$
and for each choice of multiplicity vectors $\bar{\mu}, \bar{\nu}$ for the exponent $p_0$ of lengths respectively $r$ and $s$. Combining Lemma \ref{Lemma 5}\eqref{F-G-G'}
with Lemma \ref{Lemma 8}, we find that 
\[ \sum_{h=1}^{m} \mathtt F_h[r, \bar{\mu}; s, \bar{\nu}] \cong \sum_{h=1}^{m} \mathtt G_h[r,
\bar{\mu}; s, \bar{\nu}]  \cong \sum_{h=1}^{m} O(h^{r-1}) =
O(m^r) = O(m^{p_0}) \text{ if } \bar{\mu} = \bar{\nu}, \]
and $\cong 0$ otherwise, completing the proof.  
\end{proof}

\subsection{Proof of Lemma \ref{sum-prep-lemma}} 
\begin{proof} 
We start by expanding the $L^{2p_0}$-norm of $f$ as follows, 
\begin{eqnarray} \label{norm as sum}
||f||_{2p_0}^{2p_0} = \int \Bigl[\sum_{N} f_N \Bigr]^{p_0}
\overline{\Bigl[\sum_{N}f_N \Bigr]^{p_0}} \, dx =   \int_{\RR^{n}}  \sum_{\mathbf N, \mathbf N'}  
f_{N_{1}} \cdots f_{N_{p_{0}}}\overline{f_{N'_{1}} \cdots f_{N'_{p_{0}}}} dx, 
\end{eqnarray}
where the summation ranges over all $p_0$-dimensional multi-indices
\[ \mathbf N = (N_1, \cdots, N_{p_0}), \; \mathbf N' = (N_1', \cdots,
N_{p_0}') \in \{1, \ldots, 2^{m}-1\}^{p_0}. \] The entries in $\mathbf N$ (and hence also $\mathbf N'$) need not
be distinct. However, in view of \eqref{partition} and the discussion leading up to it, for every $\mathbf N$ there exist 
\begin{itemize} 
\item a unique integer $1 \leq r = r(\mathbf N) \leq p_0$, 
\item a multiplicity vector $\bar{\mu} = \bar{\mu}(\mathbf N)$ of
  length $r$ for $p_0$, and 
\item a choice of $\bar{m} = \bar{m}(\mathbf N) = (m_1, \cdots, m_r) \in \mathbb
A_m[r, \bar{\mu}]$ defined as in \eqref{def-Ah},  
\end{itemize} 
such that $\mathbf N[\bar{m}, \bar{\mu}]$ is a permutation of $\mathbf N$, i.e.,   $m_1 < m_2 < \cdots < m_r$, and $m_i$
occurs exactly $\mu_i$ times in $\mathbf N$. Thus \[f_{N_1} \cdots
f_{N_{p_0}} = f_{m_1}^{\mu_1} \cdots f_{m_r}^{\mu_r}.\] Further, given
a fixed choice of $\bar{\mu}$ and $\bar{m}$, there are exactly
${{p_0}\choose{\bar{\mu}}} = p_0!/(\mu_1! \cdots \mu_r!)$-many
possibilities of $\mathbf N$ that correspond to the same choice of $\mathbf N[\bar{m}, \bar{\mu}]$. Grouping the sum in \eqref{norm
  as sum} using these multiplicities, we obtain 
\begin{equation} \label{norm-rewritten}  ||f||_{2p_0}^{2p_0} = \int \sum'_{\begin{subarray}{c}r, s,
    \bar{\mu}, \bar{\nu} \end{subarray}}
{{p_0}\choose{\bar{\mu}}} {{p_0}\choose{\bar{\nu}}} \sum_{\bar{\alpha}} 
F_{\bar{\alpha}}(x) \, dx,  \end{equation} 
where $F_{\bar{\alpha}}$ has been defined in \eqref{def-Falpha}, the outer sum is as in the statement of the lemma and the inner sum ranges over all multi-indices $\bar{\alpha} = (\bar{m},
\bar{n}) \in \mathbb A_m[r, \bar{\mu}] \times \mathbb A_m[s,
\bar{\nu}]$. Finally, we note that \[ \mathbb A_m[r, \bar{\mu}] \times \mathbb
A_m[s, \bar{\nu}] = \bigsqcup_{h=1}^m \mathbb M_h[r, \bar{\mu} ; s,
\bar{\nu}]. \]
Decomposing the inner sum in \eqref{norm-rewritten} based on $h$ therefore leads to \eqref{sum with mult}. 
\end{proof}  

\subsection{Proof of Lemma \ref{Lemma 5}} 
\begin{proof} Part \eqref{FsameG} of the lemma is based on an iterative
  application of the following estimate: for any measurable function
  $H$ with $||H||_{\infty} \leq 1$, \eqref{smallerrors} gives 
\begin{equation}\label{core estimate} 
\left| \int H(x) (g_N - \chi_N)(x) \, dx \right| \leq ||g_N -
\chi_N||_1 \leq 2^{-Cm}. 
\end{equation}
We use this estimate to successively peel away each factor $f_N$
occurring in $F_{\bar{\alpha}}$, replacing it by $e(p_N \cdot)\chi_N$
instead. Specifically, starting with any $\bar{\alpha} = (\bar{m},
\bar{n}) \in \mathbb M_h[r, \bar{\mu}; s, \bar{\nu}]$, we can
write \[ F^{[1]} :=  F_{\bar{\alpha}} = f_{m_1} F^{[2]} = e(p_{m_1}
\cdot ) g_{m_1} F^{[2]}, \quad \text{ where
} \quad F^{[2]} = f_{m_1}^{\mu_1-1} \cdots f_{m_{r}}^{\mu_r}
\overline{f_{n_1}^{\nu_1} \cdots f_{n_s}^{\nu_s} }\] 
is a product of $p+q-1$ factors. As a result, 
\begin{equation} \label{hacking away}
\begin{aligned} 
\int_{\RR^{n}}  F_{\bar{\alpha}}(x) dx  &= \int_{\mathbb R^n}
F^{[1]}(x) \, dx = \int_{\mathbb R^n} f_{m_1}(x) F^{[2]}(x) \, dx \\
&= \int_{E_{m_1}} e(\bar{p}_{m_1}\cdot x) F^{[2]}(x) \, dx + \int
e(\bar{p}_{m_1}\cdot x) (g_{m_1} - \chi_{m_1})(x) F^{[2]}(x) \, dx \\
&\cong \int_{E_{m_1}} e(\bar{p}_{m_1}\cdot x) F^{[2]}(x) \, dx. 
\end{aligned}
\end{equation} 
The last step above uses \eqref{core estimate} with $H = e(p_{m_1}
\cdot ) F^{[2]}$, which is bounded by 1 according to Proposition
\ref{Lemma 3}\eqref{gN-conditions}.  
Iterating the argument in \eqref{hacking away} exactly $\mu_{1} + \ldots +
\mu_{k} + \nu_{1} + \ldots \nu_{l} = p+q$ times (and using \eqref{core
  estimate} with a different choice of $H$ at each stage), we are able to remove all factors
$f_N$ and are left with the integrand $G_{\bar{\alpha}}$. This is the
desired claim \eqref{FsameG-estimate}. 
\vskip0.1in
\noindent Regarding \eqref{E-nonempty}, we recall from Lemma \ref{Lemma
  4}\eqref{EN-tree-system} that the family of functions $\{\chi_N :=
\chi_{E_N}\}$ is a tree system. In particular, for any two indices $N
< N'$, the sets $E_N$ and $E_{N'}$ (which are non-empty by Proposition \ref{Lemma 3}) are either disjoint or
nested. Their intersection is nonempty precisely when $E_N
\supseteq E_{N'}$, 
which in turn happens if and only if $N$ is an ancestor of
$N'$, when represented as vertices on the binary tree $\mathcal T_h$. Thus
$E_{\bar{\alpha}}$ is nonempty if and only if there is a strict lineage among the indices in
$\bar{\alpha}$, i.e., for any two entries of $\bar{\alpha}$, one is either an ancestor or a descendant of the other. In other
words, the vertices of $\bar{\alpha}$ lie on a ray of $\mathcal T_h$, i.e. $\bar{\alpha}
\in \mathbb M_{h}'$. It follows from the definition \eqref{def-esets} and more precisely from Lemma \ref{Lemma 4}\eqref{EN-tree-system} that the sets $E_N$ shrink as $N$ proceeds down a ray of the tree. Thus $E_{\bar{\alpha}}$ must equal $E_{N_0}$, where $N_0 = ||\bar{\alpha}||_{\infty}$ is the terminating vertex of the ray that contains the indices of $\bar{\alpha}$. This leads to \eqref{set-vector-number}.      
\vskip0.1in
\noindent Part \eqref{F-G-G'} is obtained by adding the estimates
deduced in the first two parts of the lemma over all $\bar{\alpha} \in
\mathbb M_h$. The verification is left to the interested reader. Note the
importance of the large constant $C$ in this step, as a result of
which the error implicit in $\cong$ remains small even after summing over $\#(\mathbb M_h) = O(2^{2hp}) = O(2^{2mp_0})$
terms.  
\end{proof}

\subsection{Proof of Lemma \ref{Lemma 8}} 
\begin{proof}
In view of Lemma \ref{Lemma 5}\eqref{F-G-G'}, we know that $\mathtt G_h \cong
\mathtt G_h'$. To establish the first relation in \eqref{Gh-estimate},
it therefore suffices to show that $\mathtt G_h' \cong \mathtt
G_h^{\ast}$. This in turn will follow from the estimate below. For any choice of $h, p, q, r, s, \bar{\mu}, \bar{\nu}$, 
\begin{equation} \label{Galpha0}
\begin{aligned} \int G_{\bar{\alpha}}(x) \, dx &\cong 0 \text{ for }
\bar{\alpha}=(\bar{m}, \bar{n}) \in
\mathbb M_h'[r, \bar{\mu}; s, \bar{\nu}]\setminus \mathbb M_h^{\ast}[r, \bar{\mu}; s, \bar{\nu}], \text{ i.e., } \\   
\int G_{\bar{\alpha}}(x) \, dx &\cong 0 \text{ unless } \mu_r = \nu_s \text{
  and } m_r = n_s.    
\end{aligned} 
\end{equation}
By Lemma \ref{Lemma 5}\eqref{FsameG} combined with Plancherel's
theorem, we obtain 
\begin{eqnarray*}
\int G_{\bar{\alpha}}(x) \, dx \cong \int_{\RR^{n}} F_{\bar{\alpha}}(x) dx   \,\,  =  \,\,
\int_{\RR^{n}}  \widehat{f}^{(\mu_{1})}_{m_{1}} \ast \cdots \ast
\widehat{f}^{(\mu_{r})}_{m_{r}} (\xi) \times \overline{\widehat{f}^{(\nu_{1})}_{n_{1}}  \ast \cdots \ast\widehat{f}^{(\nu_{s})}_{n_{s}}}(\xi) d\xi,
\end{eqnarray*}
where we use the notation $h^{(l)}$ to denote the $l$-fold convolution
of $h$ with itself. 
The integrand on the right hand side above is supported in the set
\begin{eqnarray*}
\big( \mu_{1}Q_{m_{1}} + \ldots + \mu_{r}Q_{m_{r}} \big) \cap \big( \nu_{1}Q_{n_{1}} + \ldots + \nu_{s}Q_{n_{s}}  \big).
\end{eqnarray*}
By \eqref{cubespread} in Lemma \ref{Lemma
  3}\eqref{intersection}, this intersection is empty unless $m_{r} =
n_{s}$ and $\mu_{r} = \nu_{s}$, establishing \eqref{Galpha0}. 
\vskip0.1in
\noindent For the second relation in \eqref{Gh-estimate}, we will rely
on the following recursion formula, to be proven shortly:
\begin{equation} \label{recursion} 
\begin{aligned} \mathtt G_h'[r, \bar{\mu}; s, \bar{\nu}] &\cong \sum_{h_1=1}^{h-1} \mathtt
G_{h_1}'[r-1, \bar{\mu}^{[1]};s-1, \bar{\nu}^{[1]}], \text{ where } \\  \bar{\mu}^{[1]} &= (\mu_1,
\cdots, \mu_{r-1}), \; \bar{\nu}^{[1]} = (\nu_1, \cdots, \nu_{s-1}) 
\end{aligned} 
\end{equation}      
are multiplicity vectors of lengths $r-1$ and $s-1$ for the exponents $p - \mu_r$ and $q - \nu_s$ respectively. 
Assuming this for now, the proof is completed as follows. 
\vskip0.1in
\noindent First suppose that $\bar{\mu} \ne \bar{\nu}$, and that $0 \leq t \leq \min(r, s)$
is the smallest index such that $\mu_{r-t} \ne \nu_{s-t}$. If $t = 0$,
then $\mathtt G_h \cong 0$ directly from \eqref{Galpha0}. If $t > 0$,
then a $t$-fold iteration of \eqref{recursion} yields 
\begin{align} 
\mathtt G_h[r, \bar{\mu}; s, \bar{\nu}] \cong \mathtt G_h'[r, \bar{\mu}; s, \bar{\nu}] & \cong \sum_{h_1=1}^{h-1}
\mathtt G_{h_1}'[r-1, \bar{\mu}^{[1]};s-1, \bar{\nu}^{[1]}] \cong \cdots  \nonumber \\
&\cong \sum_{h_1=1}^{h-1} \cdots \sum_{h_t = 1}^{h_{t-1}-1} \mathtt G_{h_t}'[r-t, \bar{\mu}^{[t]};s-t, \bar{\nu}^{[t]}],  \label{last-quantity}
\end{align} 
with $\bar{\mu}^{[t]} = (\mu_1, \cdots, \mu_{r-t})$, $\bar{\nu}^{[s]} =
(\nu_1, \cdots, \nu_{s-t})$. Note that $\mu^{[t]}$ and $\nu^{[t]}$ are multiplicity vectors of length $r-t$ and $s-t$ for the exponents $p- \rho$ and $q - \rho$ respectively, where $\rho = \mu_{r-t+1} + \cdots + \mu_r = \nu_{s-t+1} + \cdots + \nu_s$. Since $\mu_{r-t} \ne \nu_{s-t}$, we can apply \eqref{Galpha0}, with the parameters $h, p, q, r, s, \bar{\mu}, \bar{\nu}$ in \eqref{Galpha0}
replaced by $h_t, p - \rho, q - \rho, r-t, s-t, \bar{\mu}^{[t]}, \bar{\nu}^{[t]}$ respectively. This leads to the estimate  
\[ \int G_{\bar{\alpha}}(x) \, dx  \cong 0 \text{ for every } \bar{\alpha} \in \mathbb M_{h_t}'[r-t, \bar{\mu}^{[t]}; s-t, \bar{\nu}^{[t]}].\]
After summing over all the indices $\bar{\alpha}$ in the relevant collection $\mathbb M_h'$, the above relation yields that $\mathtt G_{h_t}' [r-t, \bar{\mu}^{[t]}; s-t, \bar{\nu}^{[t]}] \cong 0$. This in turn shows that the iterated sum in \eqref{last-quantity}
is also $\cong 0$, since the number of summands is at most $h^t = O(m^p)$. This completes the proof for $\bar{\mu} \ne \bar{\nu}$. 
\vskip0.1in
\noindent On the other hand, if $\bar{\mu} = \bar{\nu}$, then
iterating \eqref{recursion} $r=s$ times we find that 
\begin{align*} \mathtt G_h[r, \bar{\mu}; s, \bar{\mu}] \cong\mathtt G_h'[r, \bar{\mu}; s, \bar{\mu}] &\cong \sum_{h_1=1}^{h-1}
\cdots \sum_{h_{r-1}=1}^{h_{r-2}-1} \mathtt G_{h_{r-1}}'[1, \mu_1; 1,
\mu_1] \\ &= \sum_{h_1=1}^{h-1}
\cdots \sum_{h_{r-1}=1}^{h_{r-2}-1} 1 = O(h^{r-1}).  \end{align*}
At the penultimate step above, we have computed for any $h_{r-1} = l$,    
\[ \mathtt G_{l}'[1, \mu_1; 1, \mu_1] = \sum_{m_1: h(m_1) = l}\int_{\mathbb
R^n} \chi_{m_1} =  \int_Q \sum_{m_1: h(m_1) = l} \chi_{m_1} = 1.  \] 
The last step is a consequence of Lemma \ref{Lemma 4}\eqref{sum-over-ht} with $N_0=1$. This completes the proof.  
\end{proof}

\subsection{Summing over subtrees: Proof of \eqref{recursion}} 
\begin{proof}
Any $\bar{\alpha} = (\bar{m}, \bar{n}) \in \mathbb M_{h}^{\ast}[r,
\bar{\mu}; s, \bar{\nu}]$ can be written as $\bar{\alpha} = (\bar{m}',
m_r; \bar{n}', m_r)$, where \[h(m_r) = h, \quad \bar{m}' = (m_1,
\cdots, m_{r-1}), \quad \bar{n}' = (n_1, \cdots, n_{s-1}), \]  and $\bar{\alpha}' = (\bar{m}', \bar{n}')$ is
a string of vertices lying on a ray in $\mathcal T_m$ and terminating
in the vertex $||\bar{\alpha}'||_{\infty} = \max(m_{r-1}, n_{s-1})$ of height $<h$. As such, $\mathbb
M_{h}^{\ast}$ can be partitioned as 
\[ \mathbb M_h^{\ast}[r, \bar{\mu}; s, \bar{\nu}] =
\bigsqcup_{h_1=1}^{h-1} \left\{ \bar{\alpha} = (\bar{m}', m_r, \bar{n}',
m_r) \Bigl| \;\begin{aligned}  &\bar{\alpha}' = (\bar{m}', \bar{n}') \in \mathbb M_{h_1}'[r-1,
\bar{\mu}'; s-1, \bar{\nu}']  \\ &h(m_r) = h \end{aligned} \right\}. \] 
This results in a corresponding decomposition for the sum representing
$\mathtt G_h'$: 
\begin{equation} \label{three sums} \begin{aligned}
\mathtt G_h'[r, \bar{\mu}; s, \bar{\nu}] &\cong \mathtt G_h^{\ast}[r,
\bar{\mu}; s, \bar{\nu}] \\ 
&= \sum_{h_1=1}^{h-1} \sum_{\bar{\alpha}', m_r}\int
G_{\bar{\alpha}'}(x) \chi_{m_r}(x) \, dx \\ 
&= \sum_{h_1=1}^{h-1} \sum_{\bar{\alpha}'} \int
G_{\bar{\alpha}'}(x) \sum_{m_r} \chi_{m_r}(x) \, dx \\
&=\sum_{h_1=1}^{h-1} \sum_{\bar{\alpha}'} \int G_{\bar{\alpha}'}(x) \chi_{E_{\bar{\alpha}'}}(x) \,
dx \\
&= \sum_{h_1=1}^{h-1} \sum_{\bar{\alpha}'} \int G_{\bar{\alpha}'}(x)
\, dx = \sum_{h_1=1}^{h-1} \mathtt G_{h_1}'[r-1, \bar{\mu}^{[1]}; s-1, \bar{\nu}^{[1]}].  
\end{aligned} \end{equation}  
In all the sums above, $\bar{\alpha}'$ ranges over $\mathbb
M_{h_1}'[r-1, \bar{\mu}'; s-1, \bar{\nu}']$. For a given
$\bar{\alpha}$, the summation index $m_r$
ranges over descendants of $||\bar{\alpha}'||_{\infty}$ of height $h$
in $\mathcal T_m$. This has been described in Figure \ref{fig6}. 
In the third equality, the summation in $m_r$ follows from the property that $\{\chi_N\}$ is a tree system. In particular we have
invoked Lemma \ref{Lemma 4}\eqref{sum-over-ht} with
$N_0=||\bar{\alpha}'||_{\infty}$, along with \eqref{set-vector-number}.   
\end{proof} 

\begin{figure}[htbp]\centering\includegraphics[width=4.in]{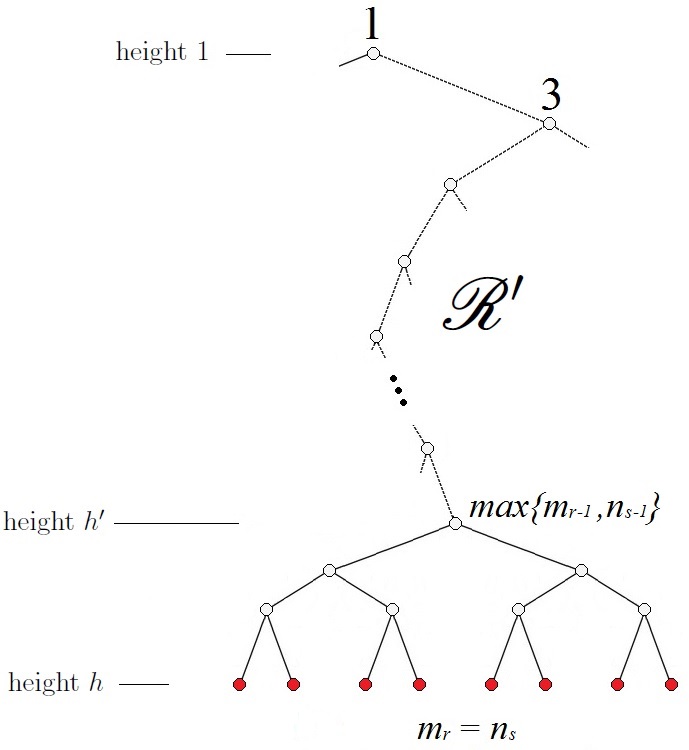}
\caption{The process of summation in \eqref{three sums}: the fixed vertices $m_{1}, \ldots m_{r-1}, n_{1}, \ldots n_{s-1}$ lie on the ray $\mathscr{R}'$, from the root of $\mathcal{T}_{m}$ to the vertex $\max\left\{m_{r-1}, n_{s-1}\right\}$. The innermost summation (in $m_r$) is over all vertices at height $h$ of the subtree rooted at $\max(m_{r-1}, n_{s-1})$.} 
\label{fig6}
\end{figure}



\section{Proof of Lemma \ref{Lemma 9}} \label{geometric-section} 

\noindent In this section we prove the geometric result used in the proof of the theorem in Section \ref{proof of thm}.
Recall that $\Gamma_{v} := \left\{x \in \RR^{n} : x \cdot v > 0\right\}$. We restate the lemma below for easier referencing.

\begin{lemma} 
Let $U$ be a set of unit vectors in $\RR^n$, all pointing in distinct directions. 
Assume that $\# U = M$ for some $M\geq 2$, and that
all vectors $v\in U$ obey $v\cdot e_n>0$, where $e_n=\left(0, \ldots 0, 1\right)$.
Then there is an ordering $\{u_1,\dots,u_{M}\}$ of vectors in $U$, and a collection of
pairwise disjoint sectors $S_{1}, \ldots S_{M-1}\subset \RR^{n}$ (see Definition \ref{Definition of sector}), such that, up to sets of
Lebesgue measure 0, we have for $l=2,\dots,M$
\begin{equation}\label{sector-inclusion-b}
\Gamma_{u_l} \cap S_i =
 \begin{cases} 
S_i &\text{ if } i < l, \\ \emptyset &\text{ if } i \geq l.
\end{cases} 
\end{equation}
\end{lemma}

\begin{proof} 
For a unit vector $v\in \RR^{n}$, we will use $\pi_{v}$ to denote the hyperplane 
$\left\{x \in \mathbb{R}^{n} : x \cdot v = 0 \right\}$. By a slight abuse of notation, we will also write 
$\Gamma^{c}_{v}=\left\{x \in \RR^{n} : x \cdot v < 0\right\}$.
\vskip0.1in
\noindent For $x' \in \RR^{n-1}$, let $r_{x'}$ be the line
\begin{eqnarray*}
r_{x'} =  \left\{(x', t) \in \mathbb{R}^{n} :\  t \in \mathbb{R}\right\}.
\end{eqnarray*}
Since $v\cdot e_n>0$ for all $v\in U$, the line $r_{x'}$ is not parallel to any of the corresponding hyperplanes $\pi_v$.
Moreover, for all $x'$ outside of an exceptional set of $(n-1)$-dimensional Lebesgue measure 0, it intersects 
these hyperplanes at distinct points. Fix one such point $x'$, and let $A_i=(x',t_i)$ be the intersection points listed in
the order of decreasing $t$ so that $t_{1} > t_{2} > \cdots > t_{M}$. We then label the vectors in $U$ as $u_1,\dots, u_M$
so that
\begin{align*} 
A_{i} &\in \pi_{u_i},\ i=1,\dots,M,
\end{align*}
and define 
\begin{align*}
S_{i} :=  \Gamma_{u_{1}}^c \cap \ldots \cap \Gamma_{u_{i}}^c \cap \Gamma_{u_{i+1}} \cap \ldots \cap \Gamma_{u_{M}},\ 
i=1,\dots,M-1.
\end{align*} 
Then $S_i \subset \Gamma^c_{u_l}$ if $l\leq i$ and 
$S_i\subset \Gamma_{u_l}$ if $l > i$, so that we have (\ref{sector-inclusion-b}). It is also clear from the definition that the
sectors $S_i$ are pairwise disjoint.
\vskip0.1in
\noindent To see that they are non-empty, it suffices to check that $B_i\in S_i$ for $i=1,\dots,M-1$, where
$B_{i}=(x',\tau_{i})$ for some choice of scalars $\tau_i$ obeying $t_{i} > \tau_{i} > t_{i+1}$. Indeed, for any $1 \leq i  \leq M-1$
and  $1 \leq l  \leq M$, we have
\begin{align*} 
\vec{OB_i}\cdot u_l = &(\vec{OA_l}+\vec{A_lB_i})\cdot u_l = \vec{A_lB_i}\cdot u_l = (\tau_i-t_l)\,(e_n\cdot u_l), \\ \text{ which is } &\begin{cases} <0 &\text{ for } l \leq  i \text{ since } \tau_i < t_l, \\   >0 &\text{ for } l > i \text{ since } \tau_i > t_l. \end{cases} 
\end{align*}
Thus $B_i \in \Gamma_l^{c}$ if $ l \leq  i$ and $B_i\in \Gamma_l$ if $ l > i$, proving the claim.  
\end{proof}
\begin{figure}[htbp]\centering\includegraphics[width=3.in]{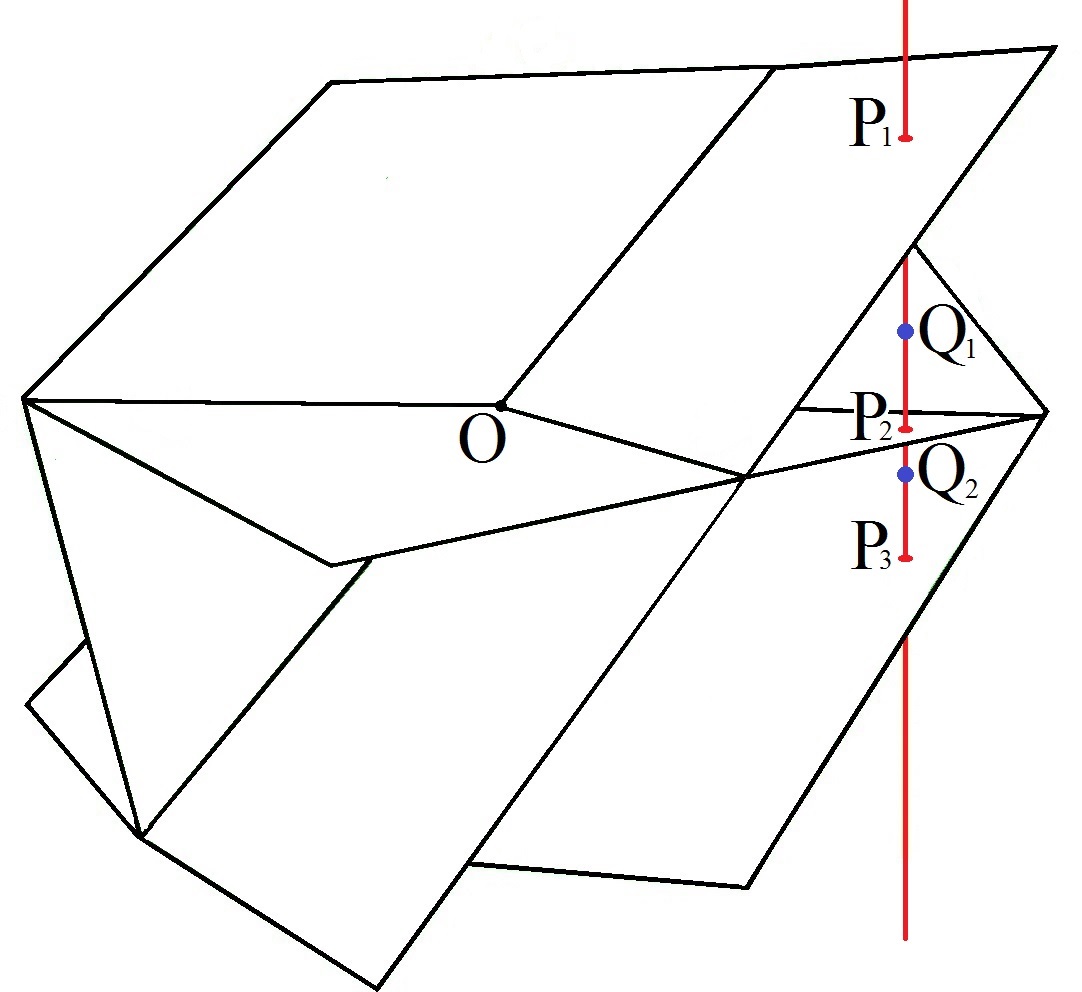}
\caption{An example of the geometric construction in Lemma \ref{Lemma 9} with $n=3$ and $M = 3$. The vertical line $r_{x'}$ intersects all three planes at distinct points.}
\label{fig7}
\end{figure}
\vskip0.1in
\noindent {\em{Remark: }} We point out the main distinctions of Lemma \ref{Lemma 9} in general dimensions relative to its planar counterpart in \cite{Karagulyan}. In dimension two, the hyperplanes $\pi_v$ are lines passing through the origin. Any conical sector is bounded by exactly two such lines. Thus $M$ lines of the form $\pi_v$ divide a half-plane into exactly $M+1$ sectors that admit an obvious ordering simply by moving in a clockwise direction. In $\mathbb R^n$, hyperplanes intersect in more complicated ways. A conical sector may be bounded by a number of hyperplanes far greater than $n$. Furthermore, a collection of $M$ vectors in $U$ typically generates many more than $M$ sectors, among which there is no natural ``global" ordering. In Lemma \ref{Lemma 9}, we choose, from the collection of all sectors, a subset of size $M-1$, on which we impose a natural ordering, in terms of the height of the sector above a fixed point in the $\{x_n = 0\}$-hyperplane.  

\bigskip

\section{Acknowledgement}
The authors thank two anonymous referees for their careful reading of the manuscript and valuable suggestions that improved the presentation. 
This work was supported by NSERC Discovery Grants 22R80520 and 22R82900.

\bigskip

\vskip0.2in
\noindent \author{\textsc{Izabella \L aba}}\\
University of British Columbia, Vancouver, Canada. \\
Electronic address: \texttt{ilaba@math.ubc.ca}
\vskip0.2in
\noindent \author{\textsc{Alessandro Marinelli}}\\
University of British Columbia, Vancouver, Canada. \\
Electronic address: \texttt{marine7@math.ubc.ca}
\vskip0.2in 
\noindent \author{\textsc{Malabika Pramanik}}\\
University of British Columbia, Vancouver, Canada. \\
Electronic address: \texttt{malabika@math.ubc.ca}

\end{document}